\date{17 January, 2022}
\numberwithin{equation}{section} 
\newcommand{\one}{{\bf 1}}
\newtheorem{theorem}{Theorem}[section]
\newtheorem{prop}[theorem]{Proposition}
\newtheorem{problem}[theorem]{Problem}
\newtheorem{lemma}[theorem]{Lemma}
\newtheorem{claim}{Claim}[section] 
\theoremstyle{definition} 
\newtheorem{definition}[theorem]{Definition}
\newtheorem{remark}[theorem]{Remark}
\newtheorem{example}[theorem]{Example}
\xdef\klockan{\the\count1.0\the\count255}
\xdef\klockan{\the\count1.\the\count255}\fi
\newenvironment{alphenumerate}[1][0pt]{% optional argument changes indentation
\addtolength{\leftmargini}{#1}\begin{enumerate}% gives (i), (ii) etc.
 }{\end{enumerate}}
\newcommand\set[1]{\ensuremath{\{#1\}}}
\newcommand\bigset[1]{\ensuremath{\bigl\{#1\bigr\}}}
\newcommand\bigpar[1]{\bigl(#1\bigr)}
\newcommand\Bigpar[1]{\Bigl(#1\Bigr)}
\newcommand\lrpar[1]{\left(#1\right)}
\newcommand\sqpar[1]{[#1]}
\newcommand\bigsqpar[1]{\bigl[#1\bigr]}
\newcommand\Aut{\mathrm{Aut}}
\newcommand\Spec{\mathrm{Spec}}
\newcommand\cF{\mathcal{F}}
\newcommand\cS{\mathcal{S}}
\newcommand\cW{\mathcal{W}}
\newcommand\Sref[1]{\cS_{\eqref{#1}}}
\newcommand\bSref[1]{\overline\cS_{\eqref{#1}}}
\newcommand\Srefdummy{\cS_{(\cdot)}}
\newcommand\bSrefdummy{\overline\cS_{(\cdot)}}
\newcommand\oi{[0,1]}
\newcommand\qq{^{1/2}}
\newcommand\norm[1]{\|#1\|_2}
\newcommand\innprod[1]{\langle#1\rangle}
\newcommand\gd{\delta}
\newcommand\dd{\,\mathrm{d}}
\newcommand\ddx{\dd x}
\newcommand\ddy{\dd y}
\newcommand\ga{\alpha}
\newcommand\gl{\lambda}
\newcommand\gL{\Lambda}
\newcommand\gs{\sigma}
\newcommand\gss{\sigma^2}
\newcommand\tM{\hat M}
\newcommand\dto{\overset{D}{\rightarrow}}
\newcommand\dtoo{\overset{D}{\longrightarrow}}
\newcommand\E{\mathbb{E}}
\newcommand\Var{\operatorname{Var}}
\newcommand\Ka{{K_{\set a}}}
\newcommand\Kb{{K_{\set b}}}
\newcommand\Kab{{E_{\set{a,b}}}}
\newcommand\Ki{{K_{\set{1}}}}
\newcommand\Qab{{K_{\{a,b\}}}}
\newcommand\Qij{{K_{\{1,2\}}}}
\newcommand\tij{t_{1,2}}
\newcommand\fett{f_{(1)}}
\newcommand\ftva{f_{(2)}}
\newcommand\setab{\set{a,b}}
\newcommand\Wi{\cW_1}
\begin{document}
%%%%%%%%%%%%%%%%%%%%%%%%%

\title[Fluctuations of Subgraph Counts in Graphon Based Random Graphs]{Fluctuations of Subgraph Counts in Graphon Based Random Graphs}
\author[Bhattacharya, Chatterjee, Janson]{Bhaswar B. Bhattacharya \and
  Anirban Chatterjee \and Svante Janson}
\thanks{
BBB partly supported by NSF CAREER Grant DMS-2046393 and a Sloan research fellowship. SJ partly supported by the Knut and Alice Wallenberg Foundation.}
\address{Department of Statistics\\ University of Pennsylvania\\ Philadelphia\\ PA 19104\\ United States}
\email{bhaswar@wharton.upenn.edu}
\address{Department of Statistics\\ University of Pennsylvania\\ Philadelphia\\ PA 19104\\ United States}
\email{anirbanc@wharton.upenn.edu}
\address{Department of Mathematics, Uppsala University, PO Box 480, SE-751 06 Uppsala, Sweden}
\email{svante.janson@math.uu.se}
\keywords{Inhomogeneous random graphs, generalized $U$-statistics, graphons, limit theorems, subgraph counts.}
\subjclass[2010]{05C80, 60F05, 05C60}

\begin{abstract} Given a graphon $W$ and a finite simple graph $H$, with vertex set $V(H)$, denote by $X_n(H, W)$ the number of copies of $H$ in a $W$-random graph on $n$ vertices. The asymptotic distribution of $X_n(H, W)$ was recently obtained by  Hladk\'y, Pelekis, and \v{S}ileikis \cite{hladky2019limit} in the case where $H$ is a clique. In this paper, we extend this result to any fixed graph $H$. Towards this we introduce a notion of $H$-regularity of graphons and show that if the graphon $W$ is not $H$-regular, then $X_n(H, W)$ has Gaussian fluctuations with scaling $n^{|V(H)|-\frac{1}{2}}$. On the other hand, if $W$ is $H$-regular, then the fluctuations are of order  $n^{|V(H)|-1}$ and the limiting distribution of  $X_n(H, W)$ can have both Gaussian and non-Gaussian components, where the non-Gaussian component is a (possibly) infinite weighted sum of centered chi-squared random variables with the weights determined by the spectral properties of a graphon derived from $W$. Our proofs use the asymptotic theory of generalized $U$-statistics developed by Janson and Nowicki \cite{janson1991asymptotic}. We also investigate the structure of $H$-regular graphons for which either the Gaussian or the non-Gaussian component of the limiting distribution (but not both)  is degenerate. Interestingly, there are also $H$-regular graphons $W$ for which both the Gaussian or the non-Gaussian components are degenerate, that is, $X_n(H, W)$ has a degenerate limit even under the scaling $n^{|V(H)|-1}$. We give an example of this degeneracy with $H=K_{1, 3}$ (the 3-star) and also establish non-degeneracy in a few examples. This naturally leads to interesting open questions on higher-order degeneracies.
\end{abstract}

\maketitle
%\tableofcontents

%\input{section1.tex}

\section{Introduction} 

A \textit{graphon} is a measurable function $W: [0,1]^2\rightarrow[0,1]$
which is symmetric, that is, $W(x,y)=W(y,x)$, for all
$x,y\in[0,1]$. Graphons arise as the limit objects of sequences of large
graphs and has received phenomenal attention over the last few years. 
They %Graphons
provide a bridge between combinatorics and analysis, and have found
applications in  several disciplines including statistical physics,
probability, and statistics; see for example
\cite{borgs2008convergent,borgs2012convergent,bmpotts,chatterjee2013estimating,chatterjee2011large}.
 For a detailed exposition of the theory of graph limits, we refer to Lov\' asz \cite{lovasz2012large}.  Graphons provide a natural sampling procedure for generating  inhomogeneous variants of the classical Erd\H{o}s--R\'enyi random graph, a concept that has been proposed independently by various authors (see \cite{bollobas2007phase,diaconis1981statistics,lovasz2006limits,boguna2003class} among others). Formally, given a graphon $W:[0,1]^{2}\rightarrow[0,1]$, a $W$-{\it random graph} on the set of vertices $[n]:=\{1,2, \ldots, n\}$, hereafter denoted by $G(n, W)$, is obtained by connecting the vertices $i$ and $j$ with probability $W(U_{i},U_{j})$ independently for all $1 \leq i < j \leq n$, where $\{U_{i}: 1 \leq i \leq n\}$ is an i.i.d.\ sequence of $U[0,1]$ random variables. An alternative way to achieve this sampling is to generate i.i.d.\ sequences $\{U_{i}: 1 \leq i \leq n\}$ and $\{Y_{ij}: 1\leq i<j \leq n\}$ of $U[0,1]$ random variables and then assigning the edge $(i, j)$ whenever $\{Y_{ij}\leq W(U_{i},U_{j})\}$, for $1 \leq i < j \leq n$. Observe that setting $W = W_p \equiv p\in [0,1]$ gives the classical (homogeneous) Erd\H{o}s--R\'enyi random graph model, where every edge is present independently with constant probability $p$.

Counts of subgraphs encode important structural information about the
geometry of a  network. In fact, the convergence of a sequence of
finite graphs to a graphon is precisely determined by the convergence of its
subgraph densities. As a consequence, understanding the asymptotic
properties of subgraph counts in $W$-random graphs is a problem of central
importance in graph limit theory. To this end, given a finite
graph $H=(V(H), E(H))$ denote by $X_{n}(H,W)$
the number of copies of $H$ in the $W$-random graph $G(n,W)$. More formally,  
\begin{align}\label{eq:XHW}
X_{n}(H,W)=\sum_{1\leq i_{1}<\cdots<i_{|V(H)|}\leq n}\sum_{H'\in
  \mathscr{G}_H(\{i_{1},\ldots, i_{|V(H)|} \}) } 
\prod_{(i_{s}, i_{t}) \in  E(H')}
\one\left\{Y_{i_{a}i_{b}}\leq W(U_{i_{a}},U_{i_{b}})\right\}, 
\end{align}
where, for any set $S \subseteq [n]$, $\mathscr G_H(S)$ denotes the
collection of all subgraphs of the complete graph $K_{|S|}$ on the vertex
set $S$ which are isomorphic to $H$. 
(We count unlabelled copies of $H$. Several other authors count labelled
copies, which multiplies $X_n(H,W)$ by $|\Aut(H)|$, cf.~\eqref{eq:fact_Aut}.)
The asymptotic distribution of $X_n(H,W_p)$ in the Erd\H{o}s--R\'enyi
model, where $W = W_p  \equiv p$,
has been classically studied
(in general with $p=p(n)$)
using various tools such as $U$-statistics
\cite{Nowicki1989,nowicki1988subgraph}, 
method of moments \cite{rucinski1988small}, 
Stein's method \cite{barbour1989central}, 
and martingales 
\cite{SJ79, SJ94},
see also \cite[Chapter 6]{JLR}, 
and the precise conditions under
which $X_n(H, W_p)$ is asymptotically normal are well-understood
\cite{rucinski1988small}. 
In particular, when $p \in (0, 1)$ is fixed, $X_n(H, W_p)$ is asymptotically
normal for any finite graph $H$ that is non-empty, i.e., has at least one edge.

In this paper we study the asymptotic distribution of $X_n(H, W)$ for
general graphons $W$.
This problem has received significant attention recently, beginning with the
work of F{\'e}ray, M{\'e}liot, and Nikeghbali~\cite{feray2020graphons},
where the asymptotic normality for homomorphism densities in general
$W$-random graphs was derived using the framework of mod-Gaussian
convergence. 
Using this machinery the authors also obtained moderate deviation
principles and local limit theorems for the homomorphism densities in this
regime. Very recently, using Stein's method, rates of convergence to
normality (Berry--Esseen type bounds) 
have been derived as well, see
\cite{kaur2021higher} (which also contain further related results) and
\cite{zhang2021berryesseen}. 
See also \cite{delmas2021asymptotic} and the references therein 
for further %related 
results.

However,
interestingly, the limiting normal distribution of the subgraph counts
obtained in 
\cite{feray2020graphons} can be degenerate depending on the structure of the
graphon $W$. This phenomenon was 
observed in \cite{feray2020graphons}, and it was
explored in detail in the  recent
paper of Hladk\'y, Pelekis, and \v{S}ileikis~\cite{hladky2019limit} for the
case where $H=K_r$ is the $r$-clique, for some $r \geq 2$. They showed that
the usual Gaussian limit is degenerate when a certain regularity function,
which encodes the homomorphism density of $K_r$ incident on a given `vertex'
of $W$, is constant almost everywhere (a.e.). In this case, the graphon $W$
is said to be $K_r$-regular and the asymptotic distribution of $X_n(K_r, W)$
(with another normalization, differing by a factor $n\qq$) 
has both Gaussian and non-Gaussian components. 
In the present paper we extend this
result to any fixed graph $H$. To this end, we introduce the analogous
notion of $H$-regularity and show that the fluctuations of $X_n(H, W)$
depends on whether or not $W$ is $H$-regular. In particular, if $W$ is not
$H$-regular, then, 
$X_n(H, W)$ is asymptotically Gaussian,
using a normalization factor $n^{|V(H)|-1/2}$.
However, if $W$ is
$H$-regular, 
then the normalization factor becomes $n^{|V(H)|-1}$ and yields
a limiting distribution of  $X_n(H, W)$ that has, in general, 
a Gaussian
component and another independent (non-Gaussian) component which is a
(possibly) infinite weighted sum of centered chi-squared random
variables. Here, the weights are determined by the spectrum of a graphon
obtained from the 2-point conditional densities of $H$ in $W$, that is, the
density of $H$ in $W$ when two vertices of $H$ are mapped to two `vertices'
of $W$, averaged over all pairs of vertices of $H$. The results are formally stated in Theorem \ref{thm:asymp_count}. Unlike in
\cite{hladky2019limit} which uses the method of moments, our proofs employ
the orthogonal decomposition for generalized $U$-statistics developed by
Janson and Nowicki \cite{janson1991asymptotic}
(see also \cite[Chapter 11.3]{SJIII}). 
This avoids cumbersome
moment calculations and provides a more streamlined framework for dealing
with the asymmetries of general subgraphs.

There are also exceptional cases, where $W$ is $H$-regular and
normalization of $X_n(H,W)$ by $n^{|V(H)|-1}$ also yields a degenerate limit;
then a non-trivial limit can be found by another normalization.
(We ignore trivial cases when $X_n(H,W)$ is deterministic.)
This cannot happen when $H=K_r$ 
as shown in \cite{hladky2019limit}, but we give an example of this degeneracy with $H=K_{1,3}$ (the 3-star); see Example \ref{example:3star}. We also show that this higher-order degeneracy cannot happen for $H=C_4$ (the 4-cycle) and $H=K_{1,2}$ (the 2-star); see Theorem \ref{thm:condition} and Theorem \ref{thm:K12Z}, respectively. It is an open problem to decide for which graphs $H$ such higher-order degeneracies may occur.

We also study the structure of $W$ is when it is $H$-regular
and one (but not both) of the two components of the limit distribution
in Theorem \ref{thm:asymp_count}(2) vanishes, so that the limit distribution either is normal, or lacks a normal component. In particular, we show that if $H$ is bipartite and $W$ is $H$-regular, then the limit  lacks a normal component if and only if $W$ is $\{0, 1\}$-valued almost everywhere (Theorem \ref{thm:bizero}).

\subsection{Organization} The rest of the paper is organized as follows. The limit theorems for the subgraph counts are presented in  Section \ref{S2}. We compute the limits in some examples in Section \ref{sec:example}. Degeneracies of the asymptotic distributions are discussed in Section \ref{Sdeg}. The main results are proved in Sections \ref{sec:proof}--\ref{sec:K12Zpf}.

\section{Asymptotic Distribution of Subgraph Counts in $W$-Random Graphs}
\label{S2}

In this section we will state our main result on the asymptotic distribution
$X_n(H, W)$. The section is organized as follows: In Section
\ref{sec:preliminaries} we recall some basic definitions about graphons. The
notions of conditional homomorphism density and $H$-regularity are
introduced in Section  \ref{sec:conditional_densities}. 
Some 
spectral properties of the integral operator corresponding to a graphon are described in Section \ref{sec:aux}.
The
result is formally stated in Section \ref{sec:XHW}.

\subsection{Preliminaries}
\label{sec:preliminaries}

 A quantity that will play a central role in our analysis the {\it homomorphism density} of a fixed multigraph $F=(V(F), E(F))$ (without loops) in a graphon $W$, which is defined as: 
\begin{align}\label{eq:tFW}
t(F,W)=\int_{[0,1]^{|V(F)|}}\prod_{(s, t) \in E(F)}W(x_{a},x_{b})\prod_{a=1}^{|V(F)|}\dd x_{a}. 
\end{align}
Note that this is the natural continuum analogue of the homomorphism density of a  fixed graph $F=(V(F), E(F))$ into  finite (unweighted) graph $G=(V(G), E(G))$ which is defined as: 
\begin{align}
t(F, G) :=\frac{|\hom(F,G)|}{|V (G)|^{|V (F)|}},
\end{align}
where  $|\hom(F,G)|$ denotes the number of homomorphisms of $F$ into $G$. 
%In fact, $t(F, G)$ is the proportion of maps $\phi: V (F) \rightarrow V (G)$ which define a graph homomorphism.
In fact, it is easy to verify that $t(F, G) = t(F, W^{G})$, where $W^{G}$ is the {\it empirical graphon} associated with the graph $G$ which defined as: 
\begin{align}\label{eq:emp_graph}
	W^G(x, y) =\boldsymbol 1\{(\lceil |V(G)|x \rceil, \lceil |V(G)|y \rceil)\in E(G)\}.
\end{align} 
(In other words, to obtain the empirical graphon $W^G$ from the graph $G$, partition $[0, 1]^2$ into $|V(G)|^2$ squares of side length $1/|V(G)|$, and let $W^G(x, y)=1$ in the $(i, j)$-th square if $(i, j)\in E(G)$, and 0 otherwise.) 

Let $H=(V(H), E(H))$ be a simple graph.
For convenience, we will throughout the paper assume that $V(H)=\{1, 2, \ldots,|V(H)|\}$.
Then,
the homomorphism density defined \eqref{eq:tFW} can also
interpreted as the probability that a $W$-random graph on $|V(H)|$ vertices 
contains
$H$, that is,  
\begin{align}
t(H,W)=\mathbb{P}(G(|V(H)|,W)\supseteq H).
\end{align} 
To see this, recall the construction of a $W$-random graph and note from \eqref{eq:tFW} that, 
\begin{align}
t(H,W)=\mathbb{E}\left[\prod_{(a,b)\in E(H)}W(U_{a},U_{b})\right] & = \mathbb{E}\left[\prod_{(a,b)\in E(H)}\one\{ Y_{ab}\leq W(U_{a},U_{b}) \}\right] \nonumber \\ 
& = \mathbb{E}\left[\one\{G(|V(H)|,W)\supseteq H \}\right]. 
\end{align}

Next, recalling \eqref{eq:XHW} note that 
\begin{align}\label{eq:XHW_mean}
    \mathbb{E}X_{n}(H,W)
    &=\sum_{1\leq i_{1}<\cdots<i_{|V(H)|}\leq n}\sum_{H'\in\mathscr{G}_H(\{i_{1},\ldots, i_{|V(H)|}\})}t(H,W) \nonumber \\
    &={n\choose |V(H)|} \left |\mathscr{G}_H( \{ 1,\ldots,  |V(H)| \} ) \right| \cdot t(H,W)
\end{align}
where the  last equality follows since the number of subgraphs of
$K_{|V(H)|}$ on $\{i_{1},\ldots, i_{|V(H)|}\}$ isomorphic to $H$ is the same for any collection of distinct indices $1\leq i_{1}<\cdots<i_{|V(H)|}\leq n$. Clearly, 
\begin{align}\label{eq:fact_Aut} 
\left |\mathscr{G}_H( \{ 1,\ldots,  |V(H)| \} ) \right| =    \frac{|V(H)|!}{|\Aut(H)|}, 
\end{align}
where $\Aut(H)$ is the collection of all automorphisms of $H$, that is, the collection of permutations $\sigma$ of the vertex set $V(H)$ such that $(x, y) \in E(H)$ if and only if $(\sigma(x), \sigma(y)) \in E(H)$. This implies, from \eqref{eq:XHW_mean}, 
\begin{align}\label{EXHW} 
\mathbb{E}X_{n}(H,W) = \frac{(n)_{|V(H)|}}{|\Aut(H)|}t(H,W) , 
\end{align} 
where $(n)_{|V(H)|}:=n(n-1)\cdots(n-|V(H)|+1)$.

\subsection{Conditional Homomorphism Densities and $H$-Regularity}
\label{sec:conditional_densities}

In this section we will formalize the notion of $H$-regularity of a graphon
$W$. To this end, we need to introduce the notion of conditional
homomorphism densities. Throughout, we will assume  $H=(V(H), E(H))$ is a
non-empty simple  graph with vertices labeled $V(H)=\{1, 2, \ldots,
|V(H)|\}$.

\begin{definition}\label{defn:marked_density} 
Fix $1 \leq K \leq |V(H)|$ and an ordered set $\bm{a}= (a_1, a_2, \ldots, a_K)$ of distinct vertices $a_1, a_2, \ldots, a_K \in V(H)$. Then the $K$-{\it point conditional homomorphism density function} of $H$ in $W$ given $\bm{a}$ is defined as: 
\begin{align}\label{ta}
    t_{\bm{a}}(\bm{x},H,W) &:=\mathbb{E}\left[\prod_{(a,b)\in E(H)}W(U_{a},U_{b})\Bigm|U_{a_{j}}=x_{j}, \text{ for } 1\leq j \leq K \right] \notag\\ 
 &\phantom:=
\mathbb{P}\left(G(|V(H)|,W)\supseteq H \bigm| U_{a_{j}}=x_{j}, \text{ for } 1\leq j \leq K \right), 
\end{align}
where $\bm{x}= (x_1, x_2, \ldots, x_K)$. In other words,  $t_{\bm{a}}(\bm{x},H,W)$ is the homomorphism density of $H$ in the graphon $W$ when the vertex $a_j \in V(H)$ is marked with the value $x_j \in [0, 1]$, for $1 \leq j \leq K$. 
\end{definition}

The conditional homomorphism densities will play a crucial role in the description of the limiting distribution of $X_n(H, W)$. In particular, the $H$-regularity of a graphon $W$ is determined by the 1-point conditional homomorphism densities, which we formalize below: 

\begin{definition}[$H$-regularity of a graphon]\label{defn:regular} 
A graphon $W$ is said to be $H$-\emph{regular} if
\begin{align}\label{eq:H_regular}
 \overline{t}(x,H,W) :=   \frac{1}{|V(H)|}\sum_{a=1}^{|V(H)|}t_a(x,H,W)=t(H,W),
\end{align}
for almost every $x \in [0, 1]$. 
\end{definition} 

Note that in \eqref{eq:H_regular} it is enough to assume that
$\overline{t}(x,H,W) $ is a constant for almost every $x \in [0, 1]$. This
is because 
\begin{align}\label{intta}
\int_0^1 t_{a}(x,H,W) \dd x = t(H, W),  
\end{align}
for all $a \in V(H)$. Hence, if 
$\overline{t}(x,H,W) $ is a constant a.e., then the constant must be $t(H, W)$. Therefore, in other words, a graphon $W$ is $H$-regular if the homomorphism density of $H$ in $W$ when one of the vertices of $H$ is marked,  is a constant independent of the value of the marking. 

\begin{remark}\label{remark:clique_regular} Note that when $H=K_r$ is the $r$-clique, for some $r \geq 2$, then $t_{a}(x,H,W)=t_{b}(x,H,W)$, for all $1 \leq a \ne b \leq r$. Hence, 
\eqref{eq:H_regular} simplifies to 
\begin{align}\label{eq:H_r}
t_{1}(x,K_r,W) = \mathbb{E}\left[\prod_{1\leq a < b \leq r}W(U_{a},U_{b})\biggm|U_{1}=x\right] = t(H,W), \text{ for almost every } x \in [0, 1],
\end{align}
which is precisely the notion of $K_{r}$-regularity defined in
\cite{hladky2019limit}. 
\end{remark}

\begin{remark}\label{RK2}
Recall that the 
 \emph{degree function} 
of a graphon $W$ is defined as
\begin{align}
  \label{dW}
d_{W}(x):=\int_{[0,1]}W(x,y) \ddy.
\end{align}
Note that for $H=K_2$, \eqref{ta} yields
\begin{align}
  \label{dw}
t_{1}(x,K_2,W) = \mathbb{E}\left[W(U_{1},U_{2})\bigm|U_{1}=x\right]  =
  \int_{[0,1]}W(x,y) \ddy = d_{W}(x).
\end{align}
%$t_1(x,K_2,W)=t_2(x,K_2,W)=d_W(x)$.
Hence,
the notion of $K_2$-regularity
coincides with the standard notion of {\it degree regularity}, where the
 degree function $ d_{W}(x):=\int_{[0,1]}W(x,y) \ddy$ is constant
a.e.
\end{remark}

\subsection{Spectrum of Graphons and 2-Point Conditional Densities}\label{sec:aux}

Hereafter, we denote by $\mathcal{W}_0$ the space of all graphons, which is
the collection of all 
symmetric, measurable functions $W: [0, 1]^2\rightarrow [0, 1]$. 
We let also $\Wi$ be the  space of all bounded, 
symmetric, measurable functions $W: [0, 1]^2\rightarrow [0, \infty)$.
Every graphon $W\in \mathcal{W}_0$,
or more generally $W\in\Wi$, 
defines an operator $T_{W}:L^{2}[0,1]\rightarrow L^{2}[0,1]$ as follows:  
\begin{equation}
(T_Wf)(x)=\int_0^1W(x, y)f(y)\dd y, 
\label{eq:TW}
\end{equation} 
for each $f\in L^{2}[0,1]$. 
$T_W$ is a symmetric  Hilbert--Schmidt operator;
thus it  is compact and has a discrete spectrum, that is, it has a countable
multiset of non-zero real eigenvalues, which we denote by
$\mathrm{Spec}(W)$, with
\begin{align}\label{Spec}
\sum_{\lambda\in \Spec(W)}\lambda^2=\iint W(x,y)^2\dd x\dd y<\infty. 
\end{align}
Moreover, a.e.,
\begin{align}\label{TW}
(T_{W}f)(x)=\sum_{\lambda\in\mathrm{Spec}(W)}\lambda\langle f,\phi_{\lambda}\rangle\phi_{\lambda}(x)  
\end{align}
and 
\begin{align}\label{eq:Ws}
W(x,y)=\sum_{\lambda\in\mathrm{Spec}(W)}\lambda\phi_{\lambda}(x)\phi_{\lambda}(y),
\end{align}
 where $\{\phi_{\lambda}\}_{\lambda\in\mathrm{Spec}(W)}$ denotes an
 orthonormal system of eigenfunctions associated with
 $\mathrm{Spec}(W)$. For a more detailed discussion on the spectral
 properties of graphons and their role in graph limit theory, see
 \cite[Chapters 7, 11]{lovasz2012large}.
%Moreover, every non-zero eigenvalue has finite multiplicity and

To describe the limiting distribution of $X_n(H, W)$ when $W$ is $H$-regular, we will need to understand the spectral properties of the following graphon obtained from the 2-point conditional densities: 

\begin{definition}\label{defn:WH} Given a graphon $W \in \mathcal{W}_0$ and
  a simple connected graph $H=(V(H), E(H))$, the {\it 2-point conditional
    graphon induced by $H$} is defined as:  
\begin{align}\label{eq:WH}
W_{H}(x,y)=\frac{1}{2|\Aut(H)|}\sum_{1\leq a\neq b\leq |V(H)|}t_{a, b}((x,y),H,W),
\end{align}
where $t_{a, b}((x,y),H,W)$ is the $2$-point conditional homomorphism
density function of $H$ in $W$ given the vertices $(a, b)$, as in Definition
\ref{defn:marked_density}.\footnote{Strictly speaking, $W_H$ is in general not a
  graphon in $\cW_0$ because it can take values greater than 1. 
However, $W_H\in \cW_1$, and we still call it a graphon.
}
(The normalization factor in \eqref{eq:WH} is chosen for later convenience
in e.g.~\eqref{tt2}.)
\end{definition}

Intuitively, $W_{H}(x,y)$ can be interpreted as the homomorphism density of
$H$ in $W$ containing the `vertices' $x, y \in [0, 1]$. 

Note that a graphon $W$ is $H$-regular (see Definition \ref{defn:regular}) 
if and only if the 2-point conditional graphon $W_H$ is
degree regular (see Remark \ref{RK2}). 
This is because, for all $x \in [0, 1]$,  
\begin{align}\label{eq:degreeWH}
\int_0^1 W_{H}(x,y)\dd y  
%& = \frac{1}{2|\Aut(H)|}\sum_{1\leq a\neq b\leq |V(H)|-1}t_{a}(x,H,W) \nonumber \\ 
& = \frac{|V(H)|-1}{2|\Aut(H)|}\sum_{a=1}^{|V(H)|}t_{a}(x,H,W), 
\end{align}
and the RHS of \eqref{eq:degreeWH} is a constant if and only if $W$ is $H$-regular. In fact, if $W$ is $H$-regular, then $\frac{1}{|V(H)|} \sum_{a=1}^{|V(H)|}t_{a}(x,H,W) = t(H, W)$ a.e.; hence, the degree of $W_H$ becomes 
\begin{align}\label{eq:degreeH}
\int_0^1 W_{H}(x,y)\dd y = \frac{|V(H)|(|V(H)|-1)}{2|\Aut(H)|} \cdot t(H, W) & := d_{W_H}, 
\end{align} 
for almost every $x \in [0, 1]$. This implies that, if $W$ is $H$-regular,
then $d_{W_H}$ is an eigenvalue of the operator $T_{W_H}$ (recall
\eqref{eq:TW}) and $\phi \equiv 1 $ is a corresponding eigenvector. In this
case, we will use $\mathrm{Spec}^{-}(W_{H})$ to denote the collection
$\mathrm{Spec}(W_H)$ with the multiplicity of the eigenvalue $d_{W_H}$
decreased by $1$.
(Note that $d_{W_H}>0$ by \eqref{eq:degreeH} 
unless $t(H,W)=0$, or $|V(H)|=1$; these cases are both 
trivial, see Remark~\ref{Rtrivial}.)

\subsection{Statement of the Main Result}\label{sec:XHW} 

To state our results on the asymptotic distribution of $X_{n}(H,W)$, we need to define a few basic graph operations. 
\begin{definition}
For a graph $H=(V(H),E(H))$ on vertex set $\{1,2,\cdots, r\}$ define,
\begin{align}
	E^{+}(H)=\{(a,b):1\leq a\neq b\leq r, (a,b) \text{ or } (b,a)\in E(H)\}
\end{align}
\end{definition}

\begin{figure}
    \centering
    \includegraphics[scale=0.7]{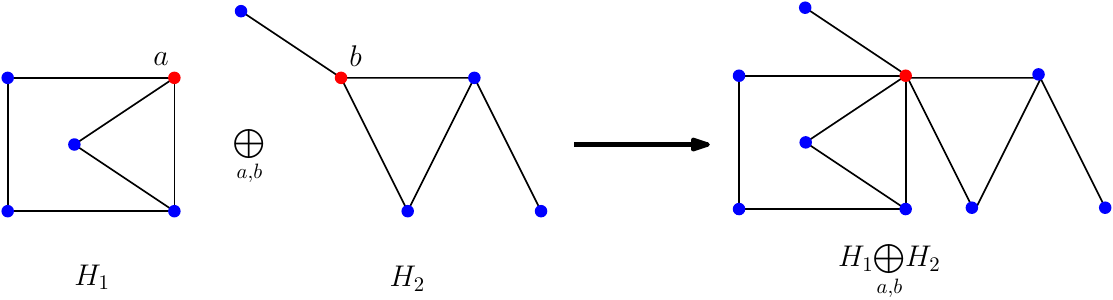}
    \caption{The $(a, b)$-{\it vertex join} of the graphs $H_1$ and $H_2$.}
    \label{fig:v_join}
\end{figure}

\begin{definition}\label{defn:H1H2ab} Fix $r \geq 1$ and consider two graphs $H_{1}$ and $H_{2}$ on the vertex set $\{1,2,\cdots,r\}$ and edge sets $E(H_1)$ and $E(H_2)$, respectively. 

\begin{itemize}

\item {\it Vertex Join}: For $a, b \in \{1,2,\cdots,r\}$, the $(a, b)$-{\it vertex join} of $H_1$ and $H_2$ is the graph obtained by identifying the $a$-th vertex of $H_1$ with the $b$-th vertex of $H_2$ (see Figure \ref{fig:v_join} for an illustration). The resulting graph will be denoted by 
\begin{align*}
H_{1}\bigoplus_{a,b}H_{2}.
\end{align*}

\item {\it Weak Edge Join}: For $(a,b) \in E^{+}(H_1)$ and $(c,d) \in E^{+}(H_2)$, with $1 \leq a\neq b\leq r$ and $1 \leq c\neq d \leq r$, the $(a, b), (c, d)$-{\it weak edge join} of $H_1$ and $H_2$ is the graph obtained identifying the vertices $a$ and $c$ and the vertices $b$ and $d$ and keeping a single edge between the two identified vertices (see Figure \ref{fig:e} for an illustration). The resulting graph will be denoted by 
\begin{align*}
    H_{1}\bigominus_{(a,b),(c,d)}  H_{2} . 
\end{align*}

\item {\it Strong Edge Join}: For $(a,b) \in E^{+}(H_1)$ and $(c,d) \in E^{+}(H_2)$, with $1 \leq a\neq b\leq r$ and $1 \leq c\neq d \leq r$, the $(a, b), (c, d)$-{\it strong edge join} of $H_1$ and $H_2$ is the multi-graph obtained identifying the vertices $a$ and $c$ and the vertices $b$ and $d$ and keeping both the edges between the two identified vertices (see Figure \ref{fig:e} for an illustration). The resulting graph will be denoted by 
\begin{align*}
H_{1}\bigoplus_{(a,b),(c,d)}  H_{2}.
\end{align*}

\end{itemize}
\end{definition}

\begin{remark}\label{rem:def_ext_join}
%We need omly the case above, but
We note that 
both  the {weak} and {strong} edge join
operations can be extened to arbitrary $(a,b)\in V(H_{1})^{2}$ and $(c,d)\in
V(H_{2})^2$  with $ a\neq b$ and $ c\neq d$; in the strong join we keep all
edges, but in the weak join we keep the join simple by merging any resulting
double edge.
(Thus, if either $(a,b)\not\in E^{+}(H_{1})$ or $(c,d)\not\in
E^{+}(H_{2})$, then the {weak} and {strong} edge joins are the same graph.)  
\end{remark}

\begin{figure}
    \centering
    \includegraphics[scale=0.7]{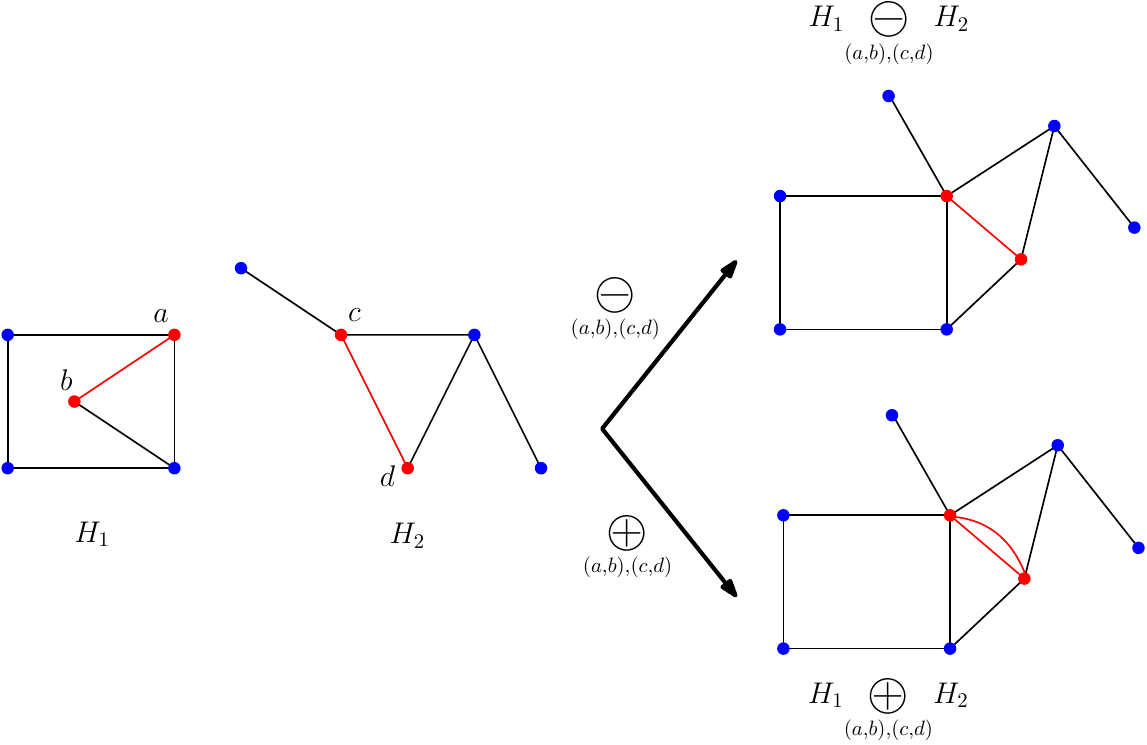}
    \caption{The weak and strong edge joins of the graphs $H_1$ and $H_2$.}
    \label{fig:e}
\end{figure}

Having introduced the framework and the relevant definitions, we are now
ready to state our main result regarding the asymptotic distribution of
$X_n(H, W)$, the number of copies of $H$ in the $W$-random graph $G(n,
W)$.

\begin{theorem}\label{thm:asymp_count}
Fix a graphon $W \in \mathcal{W}_0$ and a simple  graph
$H=(V(H), E(H))$ with vertices labeled $V(H)=\{1, 2, \ldots, |V(H)|\}$. Then
for $X_{n}(H,W)$ as defined in \eqref{eq:XHW} the following hold, as
$n\to\infty$:  
\begin{itemize}
    
    \item [$(1)$] For any $W$,
%If W is not $H$-regular, then
    \begin{align}\label{eq:limit1}
       \frac{X_{n}(H,W)-\frac{(n)_{|V(H)|}}{|\Aut(H)|}t(H,W)}{n^{|V(H)|-\frac{1}{2}}}\overset{D}{\longrightarrow}\mathsf{N}(0, \tau^{2}_{H, W}) , 
    \end{align}
    where 
    \begin{align}\label{ttau}
        \tau^{2}_{H, W} :=\frac{1}{|\Aut(H)|^{2}}\left[\sum_{1\leq a, b\leq
      |V(H)|}t\left(H\bigoplus_{a,b}H,W\right) - |V(H)|^{2}t(H,W)^{2}\right]
\ge0. 
    \end{align}
Moreover, $\tau^2_{H,W}>0$ if and only if 
W is not $H$-regular. Thus, 
if W is not $H$-regular, then $X_n(H,W)$ is asymptotically normal.   
    \item [$(2)$] If W is $H$-regular, then
    \begin{align}\label{tt2}
        \frac{X_{n}(H,W)-\frac{(n)_{|V(H)|}}{|\Aut(H)|}t(H,W)}{n^{|V(H)|-1}} \overset{D}{\longrightarrow}\sigma_{H,W} \cdot Z+\sum_{\lambda\in \mathrm{Spec}^{-}(W_{H})}\lambda(Z_{\lambda}^{2}-1) , 
    \end{align}
    where $Z$ and $\{Z_{\lambda}\}_{\lambda \in \mathrm{Spec}^{-}(W_{H})}$ all are independent standard Gaussians, 
    \begin{align}\label{gss}
        \sigma_{H,W}^{2}:=\frac{1}{2|\Aut(H)|^{2}}\sum_{(a,b),(c,d)\in
      E^{+}(H)}\left[t\left(H\bigominus_{(a,b),(c,d)}
      H,W\right)-t\left(H\bigoplus_{(a,b),(c,d)} H,W\right)\right]
\ge0, 
    \end{align}
    and $\mathrm{Spec}^{-}(W_{H})$ is the multiset $\mathrm{Spec}(W_H)$ with multiplicity of the eigenvalue $d_{W_H}$ $($recall \eqref{eq:degreeH}$)$ decreased by $1$. 
\end{itemize}
\end{theorem}

The sum in \eqref{tt2} may be infinite, but it converges in $L^2$ and
a.s.\ by \eqref{Spec}.  
The proof of Theorem \ref{thm:asymp_count} uses the projection method for
generalized 
$U$-statistics developed in Janson and Nowicki \cite{janson1991asymptotic},
which allows us to decompose $X_{n}(H,W)$ over sums of increasing
complexity. 
(See also  \cite[Chapter 11.3]{SJIII} and \cite{kaur2021higher}.) 
The terms in the expansion  are indexed by the vertices and
edges subgraphs of the complete graph of increasing sizes, and the
asymptotic behavior of $X_{n}(H,W)$ is determined by the non-zero terms
indexed by the smallest size graphs. Details of the proof are given in
Section \ref{sec:proof}. Various examples are discussed in Section
\ref{sec:example}.  

\begin{remark}\label{Rtrivial}
We note some trivial cases, where $X_n(H,W)$ is deterministic. 
First, $t(H,W)=1$ if and only if 
$H$ is empty (has no edge), or $W$ is {\it complete}, that is, $W\equiv 1$.
In both cases, almost surely
$X_{n}(H,W)=\frac{(n)_{|V(H)|}}{|\Aut(H)|}$.
% and $\overline{t}(x,H,W) = t(H,W) =1$.
Similarly, if $W$ is $H$-{\it free}, that is,  $t(H,W)=0$, then
almost surely $X_{n}(H,W)=0$. 
% and $ \overline{t}(x,H,W) =  t(H,W)=0$ a.e. 
Note also that in these cases with $t(H,W)\in\set{0,1}$, we have
$\overline{t}(x,H,W) = t(H,W)$ a.e., e.g.\ by \eqref{intta},
and thus $W$ is $H$-regular. 
Theorem \ref{thm:asymp_count} is valid for these cases too (with
limits 0), 
but is not very interesting, and 
we may without loss of generality exclude these cases and assume $0<t(H,W)<1$.
\end{remark}

\begin{remark} As mentioned earlier, the result in Theorem
  \ref{thm:asymp_count}(1) has been  proved recently by F{\'e}ray,
  M{\'e}liot, and Nikeghbali  \cite[Theorem 21]{feray2020graphons} using
  the machinery of mod-Gaussian convergence. They noted that the limiting
  distribution in \cite[Theorem 21]{feray2020graphons} might be degenerate, that
  is, $\tau_{H, W}=0$, and called this case \emph{singular}.
(This is thus our $H$-regular case).
M\'eliot \cite{meliot2021central} 
studied the \emph{(globally) singular} graphons, i.e.,
the graphons $W$ for which $\tau_{H, W}=0$,
  {\it for all graphs} $H$. For such graphons \cite{meliot2021central} derived the order of fluctuations for the homomorphism
densities, but did not identify the limiting distribution. 
\end{remark}

The main emphasis of the present paper
is Theorem \ref{thm:asymp_count}(2),
for $H$-regular graphons, where the more interesting
  non-Gaussian fluctuation emerges. Moreover, it turns out that there are non-trivial cases where also the limit in
Theorem \ref{thm:asymp_count}(2) is degenerate.
We discuss this further in Section~\ref{Sdeg}, 
where we give both an example of such a higher-order degeneracy,
and examples of graphs $H$ for which this cannot happen for any $W$.
 %with proofs in Sections \ref{sec:bipartitepf}--\ref{sec:K12Zpf}.
We will also study when one of the two components of the limit (the normal
and the non-normal component) vanishes.
In particular, in the classical Erd\H{o}s--R\'{e}nyi case $W\equiv p$,
Theorem \ref{thm:asymp_count}(2) applies to every $H$ with the non-normal
component vanishing, so the limit is normal, which is a classical result;
see further Example \ref{Ep}.

\begin{remark}
For the closely related problem of counting \emph{induced} subgraphs
isomorphic to $H$,   
limit distributions of the type in 
Theorem \ref{thm:asymp_count}(2) with a non-normal component occur (for special $H$) even in the Erd\H{o}s--R\'{e}nyi case $W\equiv p$,
but then with normalization by $n^{|V(H)|-2}$, 
see 
\cite{barbour1989central,janson1991asymptotic}. 
It seems interesting to study induced
subgraph counts in $G(n,W)$ for general graphons $W$ with our methods,
but we have not pursued this.
\end{remark}

Finally, it is worth mentioning that limiting distributions very similar to
that in Theorem \ref{thm:asymp_count}(2) also appears in the context of
counting monochromatic subgraphs in uniform random colorings of sequences of
dense graphs
~\cite{bhattacharya2017universal,bhattacharya2019monochromatic}. Although
this is a fundamentally different problem, the appearance of similar
limiting objects in both situations is interesting.

\section{Examples} 
\label{sec:example}

In this section we compute the limiting distribution of $X_{n}(H,W)$ for various specific choices of $H$ and $W$ using Theorem \ref{thm:asymp_count}.

\begin{example}(Cliques) Suppose $H=K_r$, the complete graph on $r$
  vertices, for some $r \geq 2$. This is the case that was studied in
  \cite{hladky2019limit}. To see that Theorem \ref{thm:asymp_count} indeed
  recovers the main result in \cite{hladky2019limit}, first recall Remark
  \ref{remark:clique_regular}, which shows that our notion of $H$-regularity
  matches with the notion of $K_r$-regularity defined in
  \cite{hladky2019limit}. Next, note that by the symmetry of the vertices of a clique,
\begin{align}
t\left(H\bigoplus_{a,b}H,W\right) = t\left(H\bigoplus_{1,1}H,W\right), 
\end{align} 
for $1\leq a, b\leq |V(H)|$, and $|\Aut(K_r)|= r!$. Therefore, Theorem \ref{thm:asymp_count}(1) implies, when $W$ is not $K_r$-regular,  
  \begin{align}
        \frac{X_{n}(K_r, W) - {n \choose r} t(K_r,W)}{n^{r-\frac{1}{2}}} \overset{D}{\rightarrow} \mathsf{N}\left(0, \frac{1}{(r-1)!^2}\left[ t\left(K_r \bigoplus_{1, 1} K_r, W\right) - t(K_r,W)^{2}\right] \right) ,
    \end{align}
which is precisely the result in \cite[Theorem 1.2(b)]{hladky2019limit}. For
the $K_r$-regular case, note that by the symmetry of the edges of a clique,  the
2-point conditional graphon induced by $K_r$ (recall Definition
\ref{defn:WH}) simplifies to
\begin{align}
W_{K_r}(x,y)=\frac{1}{2(r-2)!} \tij((x,y),K_r,W).
\end{align}
Moreover, for all $(a, b) , (c, d) \in E(K_r)$,
\begin{align}
t\left(K_r \bigominus_{(a,b),(c,d)}  K_r, W\right) = t\left(K_r \bigominus_{(1,2),(1,2)} K_r, W\right),
\end{align}
and similarly for the strong edge-join operation. Hence, Theorem \ref{thm:asymp_count}(2) implies
\begin{align}
   \frac{X_{n}(K_r, W) - {n \choose r} t(K_r,W)}{n^{r-1}}   \overset{D}{\rightarrow}\sigma_{K_r,W} \cdot Z+\sum_{\lambda\in \mathrm{Spec}^{-}(W_{K_r})}\lambda(Z_{\lambda}^{2}-1)
    \end{align}
    with \begin{align}
        \sigma_{K_r,W}^{2}=\frac{1}{2(r-2)!^{2}} \left\{t\left(H\bigominus_{(1,2),(1,2)} H,W\right)-t\left(H\bigoplus_{(1,2),(1,2)} H,W\right)\right\}, 
    \end{align}
as shown in \cite[Theorem 1.2(c)]{hladky2019limit}. 
\end{example}

\begin{figure}[!ht]
    \centering 
    \begin{minipage}{1.0\textwidth}
        \centering
        \includegraphics[width=4.75in]{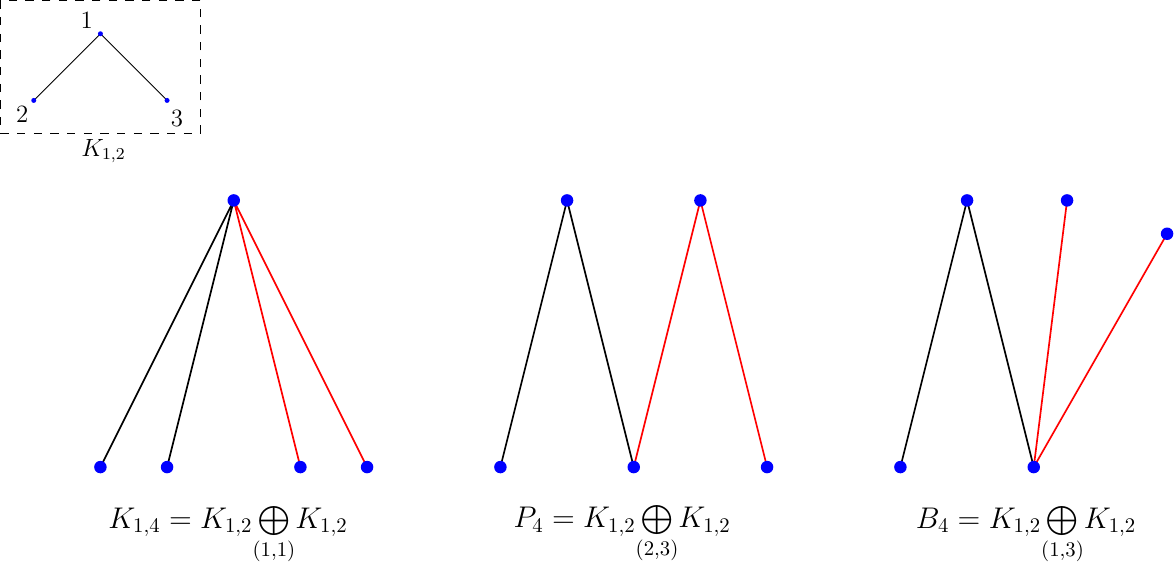} \\ 
            \end{minipage} 
            \caption{The different non-isomorphic graphs that can be obtained by the vertex join of two copies of $K_{1, 2}$ (with vertices labeled $\{1, 2, 3\}$ as in the inset). }
    \label{figure:example_K12}
\end{figure}

\begin{example} (2-Star) Suppose $H=K_{1,2}$ with the vertices labelled $\{1, 2, 3\}$ as shown in Figure \ref{figure:example_K12}. In this case, for any graphon $W \in \mathcal W_0$, 
\begin{align}\label{eq:t1_K12}
    t_{1}(x,K_{1,2},W)=\int_{0}^1 W(x,y)W(x,z)\dd y\dd z = d_{W}(x)^2, 
\end{align}
where the degree function $d_{W}(x)$ is defined in \eqref{dW}, 
and 
\begin{align}\label{eq:t23_K12}
    t_{2}(x,K_{1,2},W)=t_{3}(x,K_{1,2},W)=\int_0^1 W(x,y)W(y,z)\dd y\dd z = \int_0^1 W(x,y)d_{W}(y) \dd y , 
\end{align}    
Then by Definition \ref{defn:regular}, \eqref{eq:t1_K12} and
\eqref{eq:t23_K12}, $W$ is $K_{1,2}$-regular if and only if
\begin{align}\label{eq:K12_reg}
    d_{W}(x)^{2} + 2\int_{0}^{1}W(x,y)d_{W}(y)dy = 3t\bigpar{K_{1,2},W},
\quad\text{for a.e. }x\in [0,1]
.\end{align}
In particular, if $W$ is degree regular, then the left-hand side of 
\eqref{eq:K12_reg} is constant, and thus $W$ is $K_{1,2}$-regular.
(We conjecture that the converse holds too, but we have not verified this.)

Therefore, from Theorem \ref{thm:asymp_count} we have the following: 
\begin{itemize}
    
    \item If \eqref{eq:K12_reg} does not hold, then
        \begin{align}
       \frac{X_{n}(K_{1, 2},W)- 3 {n \choose 3} t(K_{1,2},W)}{n^{\frac{5}{2}}}\overset{D}{\rightarrow}\mathsf{N}(0, \tau^{2}_{K_{1, 2}, W})
    \end{align}
with \begin{align}
\tau^{2}_{K_{1, 2}, W} :=\frac{1}{4}\Big\{t(K_{1, 4},W) + 4 t(P_4, W) + 4 t(B_4, W) -  9 t(K_{1, 2},W)^{2} \Big\},
\end{align} 
where the graphs $K_{1, 4}$, $P_4$, and $B_4$ are as shown in Figure
\ref{figure:example_K12}. Note that $K_{1, 4}$ is the 4-star (obtained by
joining the two central vertices of the 2-stars), $P_4$ is the path with 4
edges (obtained by joining a leaf vertex of one 2-star with a leaf vertex of
another), and $B_4$ is the graph obtained by joining the central vertex of
one 2-star with a leaf vertex of another. For a concrete example of a
graphon which is not $K_{1, 2}$-regular, consider $W_0(x,y):=xy$. In this
case, $d_{W_0}(x) = \frac12x$, 
for all  $x\in [0,1]$, %is non-constant; 
and \eqref{eq:K12_reg} does not hold;
hence, $W_0$ is not $K_{1,2}$-regular.

\begin{figure}[!ht]
    \centering
    \begin{minipage}{1.0\textwidth}
        \centering
        \includegraphics[width=4.35in]{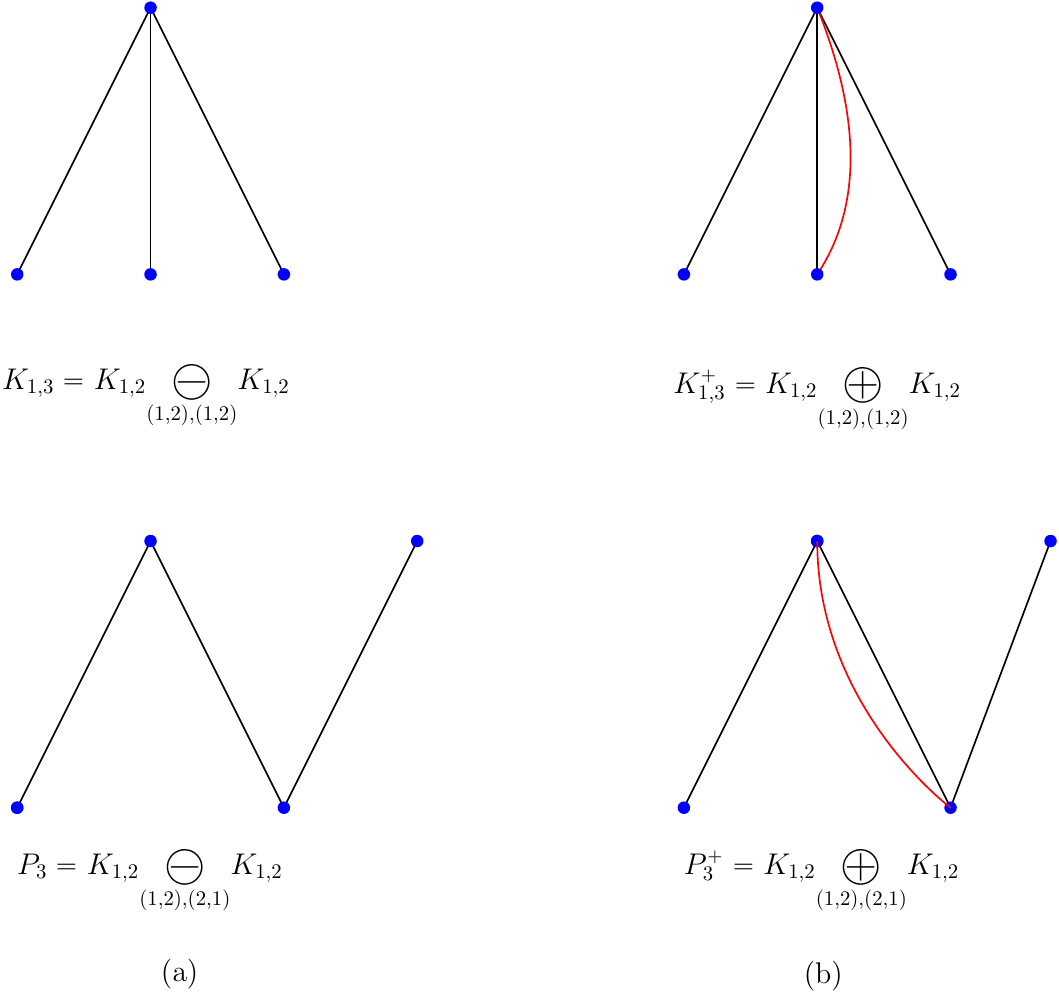} \\ 
            \end{minipage} 
            \caption{(a) The weak edge join of two copies of $K_{1,2}$ and (b) the strong edge join of two copies of $K_{1,2}$. }
    \label{figure:example_edge_K12}
\end{figure}

\item For every $H$, 
%If \eqref{eq:K12_reg} holds, then 
\begin{align}
        \frac{X_{n}(K_{1, 2},W)- 3 {n \choose 3} t(K_{1,2},W)}{n^{2}} \overset{D}{\rightarrow}\sigma_{K_{1, 2},W} \cdot Z+\sum_{\lambda\in \mathrm{Spec}^{-}(W_{K_{1, 2}})}\lambda(Z_{\lambda}^{2}-1) , 
    \end{align}
with 
\begin{align}\label{eq:sigma_K12}
	\sigma_{K_{1, 2},W}^{2}:=2\{t(K_{1, 3}, W )+t(P_3,W)-t(K_{1, 3}^+,W)-t(P_3^+,W)\},
\end{align} 
where $K_{1, 3}$ is the 3-star and $P_3$ is the path 
shown in Figure \ref{figure:example_edge_K12}(a) 
(obtained by the weak edge-join of two
copies of $K_{1, 2}$ using the edges $(1,2),(1,2)$ and $(1,2),(2,1)$
respectively) and the $K_{1, 3}^{+}$  and $P_3^{+}$ are the multigraphs
shown in Figure \ref{figure:example_edge_K12}(b)  (obtained by the strong
edge-join of two copies of $K_{1, 2}$  using the edges $(1,2),(1,2)$ and
$(1,2),(2,1)$ respectively). Moreover, in this case the 2-point conditional
graphon $W_{K_{1, 2}}$ simplifies to:  
\begin{align}\label{eq:K12W}
W_{K_{1, 2}}(x, y)= \frac{1}{2} \left\{ W(x, y)( d_W(x) + d_W(y) ) + \int W(x, z) W(y, z) \dd z \right \},
\end{align}
since $\tij(x, y, K_{1, 2}, W) = t_{1, 3}(x, y, K_{1, 2}, W)= W(x, y) d_W(x)$ and $t_{2, 3}(x, y, K_{1, 2}, W) = \int_{[0, 1]} W(x, z) W(y, z)  \dd z$, and similarly for the others. 
%Similarly,  $t_{2, 1}(x, y, K_{1, 2}, W) =t_{3, 1}(x, y, K_{1, 2}, W)=W(x, y) d_W(y)$, and $t_{3,2}(x, y, K_{1, 2}, W) = \int_{[0, 1]} W(x, z_1) W(y, z_1)  \dd z_1$. 
For a concrete example of graphon which is $K_{1, 2}$-regular consider 
\begin{align}
    \tilde{W}(x,y):=
    \begin{cases}
        p & \text{ if }(x,y)\in \left[0, \frac{1}{2}\right]^2 \bigcup \left[\frac{1}{2}, 1\right]^2 , \\
        0 & \text{ otherwise} .
    \end{cases}
\end{align} 
Note that this is a 2-block graphon (with equal block sizes) taking value $p$ in the diagonal blocks and zero in the off-diagonal blocks. (One can think of this as the `disjoint union two Erd\H{o}s--R\'{e}nyi graphons'.) 
%It is shown in Fig \ref{fig:disgraphon}. 
It is easy to check that this graphon is degree regular, hence $K_{1, 2}$-regular. In fact, in this case
\begin{align}
    \tilde{W}_{K_{1,2}}(x,y)=
    \begin{cases}
        \frac{3p^{2}}{4} & \text{ if }(x,y)\in \left[0, \frac{1}{2}\right]^2 \bigcup \left[\frac{1}{2}, 1\right]^2 , \\
        0 & \text{ otherwise} .
    \end{cases}
\end{align}
and $\sigma_{K_{1, 2},\tilde{W}}^{2} =\frac{1}{2}p^3(1-p)$.
Moreover,
\begin{align}
\mathrm{Spec}(\tilde{W}_{K_{1, 2}})=\{3p^{2}/8,3p^{2}/8\},
\end{align} with the
eigenfunctions $1$ and $\bm{1}\{[0,1/2]\} - \bm{1}\{[1/2,1]\}$,
respectively. 
In particular, $d_{W_{K_{1,2}}}=3p^2/8$
in agreement with \eqref{eq:degreeH}.
Consequently, $\mathrm{Spec}^{-}(\tilde{W}_{K_{1, 2}})=\{3p^{2}/8\}$.  
\end{itemize} 
\end{example}

\begin{example}(Erd\H{o}s--R\'{e}nyi graphs)\label{Ep}
Suppose that $W=W_p\equiv p$ for some $p\in(0,1)$.
By symmetry,
$ \overline{t}(x,H,W)$ does not depend on $x$, and thus $W_p$ 
is $H$-regular for every $H$. 
Furthermore, by \eqref{eq:WH}, also the 2-point conditional graphon $W_H$ is
constant, which implies (see also Proposition \ref{ppn2}) that 
$\mathrm{Spec}^-(W_H)=\emptyset$ and thus the limit in 
Theorem \ref{thm:asymp_count}(2) is normal for every non-empty $H$.
(We have $\sigma^2_{H,W}>0$ by \eqref{gss}.)
As said earlier, this is a classical result, see e.g.\
\cite{Nowicki1989,nowicki1988subgraph,rucinski1988small,barbour1989central,SJ79,SJ94,JLR}.
\end{example}

\section{Degeneracies of the Asymptotic Distribution}
\label{Sdeg}

In this section we will discuss the degeneracies of asymptotic distribution
when $W$ in $H$-regular; we will throughout the section tacitly 
ignoring the trivial cases in Remark \ref{Rtrivial}, i.e., we
assume that $0<t(H,W)<1$.
Towards this denote
\begin{align}%\label{eq:ZHW}
       Z_n(H, W) :=
        \frac{X_{n}(H,W)-\frac{(n)_{|V(H)|}}{|\Aut(H)|}t(H,W)}{n^{|V(H)|-1}} . 
\end{align}        
Theorem \ref{thm:asymp_count}(2) shows that when $W$ is $H$-regular, 
\begin{align}\label{eq:limit2}
   Z_n(H, W)      \overset{D}{\rightarrow}\sigma_{H,W} \cdot Z
+\sum_{\lambda\in \mathrm{Spec}^{-}(W_{H})}\lambda(Z_{\lambda}^{2}-1) , 
    \end{align}
    where $Z, \{Z_{\lambda}\}_{\lambda \in \mathrm{Spec}^{-}(W_{H})}$ are all independent standard Gaussians, and $\sigma_{H,W}^{2}$ is as defined in Theorem \ref{thm:asymp_count}. This raises the following natural questions: 
       
\begin{itemize}

\item {\it Is the limiting distribution of $Z_n(H, W)$ non-degenerate?}
  Given the result in Theorem~\ref{thm:asymp_count} it is natural to wonder
  whether, when $W$ is $H$-regular,  
the limiting distribution of $Z_n(H, W)$ in \eqref{eq:limit2}
is always  non-degenerate. This is indeed the case for cliques: 
if $H=K_r$ for some $r \geq 2$, then it 
was shown in  \cite[Remark 1.6]{hladky2019limit} 
that the limit in  \eqref{eq:limit2} is never degenerate.
However, for general graphs $H$ the situation is
  surprisingly more complicated. It turns out that there are graphs $H$ for
  which there exist a $H$-regular graphon $W$, with $0<t(H, W)<1$, such that
  the limit in \eqref{eq:limit2} is degenerate (see Example
  \ref{example:3star}). Naturally this raises the question: {\it For which
    graphs $H$ is the limiting distribution of $Z_n(H, W)$ always
    non-degenerate?} In Section \ref{sec:cyclestar} we answer this question
 in the affirmative when $H=C_4$ is the 4-cycle and $H=K_{1, 2}$ is the 2-star. 

\end{itemize}

In cases when the limit in \eqref{eq:limit2} is non-degenerate, we can ask
about the structure of $W$ when one of the components of the limit vanishes:

\begin{itemize}

\item {\it When is the limiting distribution of $Z_n(H, W)$ normal?} Note from \eqref{eq:limit2} that $Z_n(H, W)$ is asymptotically Gaussian if and only if the non-Gaussian component
\begin{align*}
\sum_{\lambda\in \mathrm{Spec}^{-}(W_{H})}\lambda(Z_{\lambda}^{2}-1) 
\end{align*}
is degenerate. We show in Proposition \ref{ppn2} that this happens precisely
when the 2-point conditional graphon $W_H$ is constant a.e.  

\item {\it When is the limiting distribution of $Z_n(H, W)$ normal-free?}
  Clearly, the limit \eqref{eq:limit2} has no Gaussian component whenever
  $\sigma_{H, W}=0$. In Theorem \ref{thm:bizero} we characterize the
  structure of such graphons when $H$ is bipartite: we show that if $H$ is
  bipartite,  then the limit in \eqref{eq:limit2} is normal-free if and only if
  $W(x,y)\in \{0,1\}$ a.e. (that is,  $W$ is {\it random-free}).  
We also show that there are non-bipartite graphs $H$ and graphons $W$ which are 
not random-free for which $\sigma_{H, W}=0$ (Example \ref{ExH}).

\end{itemize}

\subsection{Degeneracy of the Non-Gaussian Component}

The following proposition characterizes when the limit in \eqref{eq:limit2}
is Gaussian. 
It extends the special case $H=K_r$ which was shown in  
\cite[Theorem 1.3]{hladky2019limit}.

\begin{prop}\label{ppn2}
Let\/ $H$ be a simple graph and let\/ $W$ be a $H$-regular graphon. Then the following are equivalent: 
\begin{enumerate}

\item[$(1)$] $Z_n(H, W) \stackrel{D}\rightarrow N(0, \sigma^2_{H, W})$.

\item[$(2)$] $\sum_{\lambda\in \mathrm{Spec}^{-}(W_{H})}\lambda(Z_{\lambda}^{2}-1)$ is degenerate.

\item[$(3)$] $\Spec^-(W_{C_4})=\emptyset$.

\item[$(4)$]
$W_H(x,y)=d_{W_H}$ a.e., where $d_{W_H}=\frac{|V(H)|(|V(H)|-1)}{2 |\Aut(H)|}\cdot t(H,W)$ is as defined in \eqref{eq:degreeH}. 
\end{enumerate}
\end{prop}

\begin{proof} From \eqref{eq:limit2} it is clear that (1), (2) and (3) are
  equivalent. Next,
recalling the discussion following  \eqref{eq:degreeH},
 $\Spec^-(W_H)=\emptyset$ if and only if
  $\Spec(W_H)=\{d_{W_H}\}$;
furthermore, 
since $W$ is $H$-regular, $W_H$ is degree regular
  and, hence, $\phi\equiv1$ is an eigenfunction corresponding to $d_{W_H}$. 
Therefore, by \eqref{eq:Ws}, 
if   $\Spec(W_H)=\{d_{W_H}\}$, then
\begin{align}%\label{WH1}
  W_H(x,y)= d_{W_H}\phi(x)\phi(y) = d_{W_H} 
\qquad\text{a.e.}
\end{align}
Conversely, $W_H(x,y)=d_{W_H}$ a.e.\
implies that $d_{W_H}$ is the only non-zero
eigenvalue of $T_{W_H}$, and thus $\Spec^-(W_H)=\emptyset$. This establishes
that (3) and (4) are equivalent.  
\end{proof}

\subsection{Degeneracy of the Gaussian Component}

The Gaussian component in the limit \eqref{eq:limit2} is degenerate when $\sigma_{H,W}^2 = 0$. To study the structure of such graphons we need a few definitions.  For a graph $F=(V(F), E(F))$ and $S\subseteq V(F)$, the  {\it neighborhood} of $S$ in $F$ is $N_F(S)=\{v \in V(F): \exists ~u \in S \text{ such that } (u, v) \in E(F)\}$. Moreover, for $u, v \in V(F)$, $F\backslash \{u, v\}$ is the graph obtained by removing the vertices $u, v$ and all the edges incident on them. For notational convenience we introduce the following definition: 

\begin{definition}\label{defn:tabxyHW}
 Let $H$ be a labeled finite simple graph and $W$ a graphon. Then, for $1 \leq u \ne v \leq |V(H)|$, the function $t_{u, v}^{-}(\cdot, \cdot, H, W): [0, 1]^2 \rightarrow [0, 1]$ is defined as: 
\begin{align}\label{tab-}
& t_{u, v}^{-}(x, y, H, W) \notag\\
&= \int_{[0,1]^{|V(H)|-2}} \prod_{r \in N_H(u)\backslash \{v\}}W(x, z_r) \prod_{s \in N_H(v) \backslash \{u\}} W(y, z_s) \prod_{(r, s) \in E(H\backslash \{u, v\}) } W(z_r, z_s) \prod_{r \notin \{u, v\}} \dd z_r . 
\end{align} 
\end{definition}

Thus, if $(u,v)\in E(H)$, then
\begin{align}
  \label{tab-tab}
t_{u, v}(x, y, H, W) = W(x,y) t_{u, v}^{-}(x, y, H, W) .
\end{align}

Note that 
\begin{align}\label{eq:sigma-sq-exp} 
 \sigma_{H,W}^2   & = c_H\sum_{(a,b),(c,d)\in E^{+}(H)}
    \int t_{a, b}^{-}(x, y, H, W) t_{c, d}^{-}(x, y, H, W) W(x,y)(1-W(x,y)) \ddx \ddy , 
\end{align}    
where $c_H:= \frac{1}{2 |\Aut (H)|^2}$. It is clear from
\eqref{eq:sigma-sq-exp} that if $W$ is random free, then $\sigma_{H,W}^2=0$
and hence, if $W$ is $H$-regular, the asymptotic distribution does not have
a normal 
component. Interestingly, the converse is also true whenever $H$ is
bipartite. This is formulated in the following theorem:  

%To this end, recall that a graphon $W$ is said to be {\it random free}, if $W\in \{0,1\}$ almost everywhere. 

\begin{theorem}\label{thm:bizero}
If $H$ is a non-empty
bipartite graph with $t(H,W)>0$, then $\sigma_{H,W}^2 = 0$ if
and only if  $W$ is random-free.
\end{theorem}

The proof of Theorem \ref{thm:bizero} is given in Section
\ref{sec:bipartitepf}. It entails showing,  using the bipartite structure of
$H$, that for almost every $(x, y)$ such that $W(x, y) \in (0, 1)$, 
we have $t_{a, b}^{-}(x, y, H, W) > 0$, for $a\neq b\in V(H)$ such that 
$(a,b)\in E(H)$. Consequently, from \eqref{eq:sigma-sq-exp},  
$\sigma_{H,W}^2 > 0$ whenever the set $\{(x, y) \in [0, 1]^2: W(x, y) \in
(0, 1)\}$ has positive Lebesgue measure. An immediate consequence of Theorem
\ref{thm:bizero} is that for a bipartite graph $H$
and an $H$-regular $W$, 
the asymptotic distribution of $Z_n(H, W)$ is non-degenerate  whenever $W$ is not random free.

\begin{remark}\label{remark:biHW}  
The bipartite assumption in Theorem \ref{thm:bizero} is necessary, in the sense that there exist 
non-bipartite graphs $H$ and graphons $W$ with
$t(H,W)>0$ such that $\sigma_{H,W}^2 = 0$, but $W$ is not random-free. We
discuss this in Example \ref{ExH}.
\end{remark} 

For non-bipartite $H$, we note only the following, which extends 
\cite[Proposition 1.5]{hladky2019limit}.

\begin{prop}
  We have $\gss_{H,W}=0$ if and only if\/ $W(x,y)=1$ for a.e.\ $(x,y)$ such
  that 
$t_{a, b}(x, y, H, W)>0$ for some $(a,b)\in E^+(H)$.
\end{prop}
\begin{proof}
An immediate consequence of \eqref{eq:sigma-sq-exp} and \eqref{tab-tab}.  
\end{proof}

\subsection{Degeneracy of the Limit in (\ref{eq:limit2})} 
\label{sec:cyclestar}

We begin with an example where the limit in \eqref{eq:limit2} is degenerate. 

\begin{example}\label{example:3star}
Let $H=K_{1,3}$ be the 3-star on vertex set $\{1,2,3,4\}$, where the  root
node is labeled $1$. 
%(See Figure \ref{figure:example_edge_K12}.)
Further, suppose that $W$ is the complete bipartite graphon: 
\begin{align}\label{eq:bipartiteW}
    W(x,y):=
    \begin{cases}
        0 & \text{ if }(x,y)\in \left[0, \frac{1}{2}\right]^2 \bigcup \left(\frac{1}{2}, 1\right]^2 , \\
        1 & \text{ otherwise} .
    \end{cases}
\end{align} 
To begin with note that $d_{W}(x)=\int_{0}^{1}W(x,y) \ddy=\frac{1}{2}$, for all $x\in [0,1]$. Therefore, 
\begin{align}
    \frac{1}{4}\sum_{i=1}^{4}t_{i}\left(x,K_{1,3},W\right)
%    &= \frac{1}{4}\left[\int W(x,y)W(x,z)W(x,t)\ddy\dd z \dd t + 3\int W(x,t)W(t,y)W(t,z)dy\dd z \dd t\right]\\
    &=\frac{1}{4}\left[d_W(x)^3+3\int W(x,t)d_W(t)^2 \dd t\right]=  \frac{1}{8} .
\end{align}
This establishes that $W$ is $K_{1,3}$-regular, and that $t(K_{1,3},W)=1/8$. 
Next, since $W \in \{0,1\}$,
by Theorem \ref{thm:bizero}, $\sigma_{K_{1,3},W_{2}}^{2}=0$. Hence, to show
that the limit distribution of $Z_n(K_{1, 3}, W)$ is degenerate it
suffices to check that  
$\sum_{\lambda\in\text{Spec}^{-}(W_{K_{1,3}})}\lambda^{2} = 0 .$ 
By Proposition \ref{ppn2}, this is equivalent to showing
\begin{align}\label{eq:degencond2}
    W_{K_{1,3}}(x,y) = \frac{12}{ 2\left|\mathrm{\Aut}(K_{1,3})\right|}t\left(K_{1,3},W\right) = \frac{1}{8}, 
\end{align}
for a.e. $(x,y)\in [0,1]^2$ (since $\left|\mathrm{\Aut}(K_{1,3})\right| = 3!
= 6$).  Towards this recall \eqref{eq:WH}, which yields
\begin{align}\label{eq:Wxystar}
    W_{K_{1,3}}(x,y) & =\frac{1}{2|\Aut(K_{1,3})|}\sum_{1 \leq a\neq b \leq 4}t_{a,b}\left(x,y,K_{1,3},W\right) \nonumber \\ 
    & =\frac{1}{12}\bigg[3W(x,y)\int W(x,z)W(x,t) \dd z \dd t + 3W(x,y)\int W(y,z)W(y,t)\dd z \dd t  \nonumber \\
    & \hskip 5em + 6\int W(x,t)W(y,t)W(z,t) \dd z \dd t\bigg]  \nonumber \\
    &=\frac{1}{12}\left[3W(x,y)d_W(x)^2 + 3W(x, y)d_W(y)^2 + 6 \int d_W(t) W(x,t)W(y, t) \dd t\right] \nonumber \\
    &=\frac{1}{12}\left[\frac{3}{2}W(x,y) + 3\int W(x,t)W(y, t) \dd t\right]. 
\end{align}
Now, observe that if $W(x,y) = 0$ then $\int W(x,t)W(y, t) \dd t =
\frac{1}{2}$, which implies, from \eqref{eq:Wxystar}, $W_{K_{1,3}}(x,y) =
\frac{1}{8}$. Further, when $W(x,y) = 1$, then $\int W(x,t)W(y, t) \dd t =
0$, which implies $W_{K_{1,3}}(x,y) = \frac{1}{8}$. Thus for all $(x,y)\in
[0,1]^2$, $W_{K_{1,3}}=1/8$, which establishes \eqref{eq:degencond2}. This
shows that limiting distribution of $Z_n(K_{1, 3}, W)$ is degenerate for $W$
as in \eqref{eq:bipartiteW}.  

In fact,
in this example, we can easily find the asymptotic distribution of
$W_{K_{1,3}}$ directly.
Let
$M:=\bigl|\set{i:U_i\le\frac12}\bigr|\sim\operatorname{Bin}\bigpar{n,\frac12}$,
and $\tM:=M-n/2$. Then
\begin{align}\label{K13a}
  X_n(K_{1,3},W)&=M\binom{n-M}3 + (n-M)\binom M3
\notag\\&
=\frac{1}{6}\Bigpar{M(n-M)\bigpar{(n-M)^2-3(n-M)+2}+(n-M)M\bigpar{M^2-3M+2}}
\notag\\&
=\frac{1}{6}M(n-M)\bigpar{(n-M)^2+M^2-3n+4}
\notag\\&
=\frac{1}{6}\Bigpar{\frac{n}2+\tM}\Bigpar{\frac{n}2-\tM}
\Bigpar{\Bigpar{\frac{n}2-\tM}^2+\Bigpar{\frac{n}2+\tM}^2-3n+4}
\notag\\&
=\frac{1}{6}\Bigpar{\Bigpar{\frac{n}2}^2-\tM^2}
\Bigpar{2\Bigpar{\frac{n}2}^2+2\tM^2 -3n+4}
\notag\\&
=\frac{1}{3}\Bigpar{\Bigpar{\frac{n}2}^4-\tM^4}
-\frac{3n-4}{6}\Bigpar{\Bigpar{\frac{n}2}^2-\tM^2}.
\end{align}
Hence, subtracting the mean and using \eqref{EXHW},
\begin{align}\label{K13tM}
\frac{X_n(K_{1,3},W)-\frac{(n)_4}{48}}{n^2}&
%\notag\\&
=-\frac{\tM^4-\E \tM^4}{3n^2}
+\frac{3n-4}{6n}\cdot\frac{\tM^2-\E\tM^2}{n}.
\end{align}
Since the central limit theorem yields $\tM/n\qq\dto Z/2$, with all moments,
where $Z\sim N(0,1)$, 
\eqref{K13tM} yields
\begin{align}\label{K13tMa}
\frac{X_n(K_{1,3},W)-\frac{(n)_4}{48}}{n^2}&
\dtoo
-\frac{Z^4-3}{48}
+\frac{Z^2-1}8
=-\frac{1}{48}\bigpar{Z^4-6 Z^2+3}
=-\frac{1}{48}h_4(Z)
,\end{align}
where $h_4$ is the 4th Hermite polynomial (using the normalization in e.g.\
\cite[Example 3.18]{SJIII}).
Consequently, in this example, the correct normalization is by
$n^2=n^{|V(H)|-2}$, and the limit distribution is given by a 
fourth-degree polynomial of a Gaussian variable.
\end{example} 

The example above raises the question for which graphs $H$ is the limiting
distribution of $Z_n(H, W)$ 
in Theorem \ref{thm:asymp_count}(2)
non-degenerate for all graphons $W$. In the
following we will show that the limit is always non-degenerate when
$H=C_{4}$ or $H=K_{1, 2}$ (the 4-cycle and the 2-star). 
Our proofs use the specific
structure of the 4-cycle and 2-star and it remains unclear for what other
graphs can one expect the non-degeneracy result to hold.

\medskip 
  
\noindent {\it Non-Degeneracy of the Limit for the 4-Cycle}:  We begin by
deriving explicit conditions for degeneracy of the two 
components of the limiting distribution of $Z(C_4, W)$. 
(For the normal part, we can also use Theorem \ref{thm:bizero}, but we find
it interesting to first make a direct evaluation of the condition
$\gss_{H,W}=0$.) 
Towards this define:  
\begin{align}\label{eq:Uxy}
U_1(x,y):=\int_{[0, 1]} W(x,s)W(y,s) \dd s  \text{\quad and\quad } 
U_{2}(x,y):=\int_{[0, 1]^2} W(x,s)W(s,t)W(y,t) \dd s \dd t.
\end{align}

\begin{lemma}\label{LC4}
Suppose $W$ is a $C_{4}$-regular graphon with $t(C_{4},W)>0$. 
Then the following hold: 

\begin{alphenumerate}

\item $\Spec^-(W_{C_4})=\emptyset$ if and only if 
\begin{align}\label{eq:condition1}
U_1(x, y)^2 + 2W(x,y)U_{2}(x,y) = 3 t(C_4, W), 
\quad\text{a.e. } (x, y) \in [0,1]^2. 
\end{align}

\item $\sigma^2_{C_4,W}=0$ if and only if 
\begin{align}\label{eq:condition2}
\int_{[0, 1]^2} U_2^{2}(x,y) \left(W(x, y) - W^{2}(x,y) \right) \ddx \ddy =0. 
\end{align} 

\end{alphenumerate}
As a consequence, the limit of $Z_{n}(C_{4},W)$ in \eqref{eq:limit2} is degenerate if and only if  \eqref{eq:condition1} and \eqref{eq:condition2} hold. 
\end{lemma}

\begin{proof}
 Since all the vertices of the 4-cycle are symmetric, from Definition \ref{defn:regular} we have the following: The graphon $W$ is
$C_4$-regular if    
\begin{align}%\label{eq:regular4}
    \int_{[0, 1]^3} W(x,y)W(y,z)W(z,t)W(t,x) \ddy \dd z \dd t=t(C_4, W) \text{ a.e. } x \in [0, 1] . 
\end{align}
Moreover, since $|\Aut(C_4)|=8$,  by Definition \ref{defn:WH}, 
the {2-point conditional graphon induced by $C_4$} is given by 
\begin{align}%\label{eq:conditionalW}
W_{C_4}(x, y)=\frac{ 4 U_1(x, y)^2 + 8 W(x, y) U_2(x, y) }{2 |\Aut(C_4)|} = \frac{ U_1(x, y)^2 + 2 W(x, y) U_2(x, y) }{4},  
\end{align} 
where $U_1, U_2$ are as defined in \eqref{eq:Uxy}.
Hence, Proposition \ref{ppn2} shows that $\Spec^-(W_{C_4})=\emptyset$ 
if and only if \eqref{eq:condition1} holds.

Next, since all the edges of $C_4$ are symmetric, the weak edge join of 2 copies of $C_4$ is always isomorphic to graph $F_1$ in Figure \ref{fig:joins4}(a). Similarly, the strong edge join of 2 copies of $C_4$ is always isomorphic to graph $F_2$ in Figure \ref{fig:joins4}(b). 
\begin{figure}[!h]
    \centering
    \includegraphics{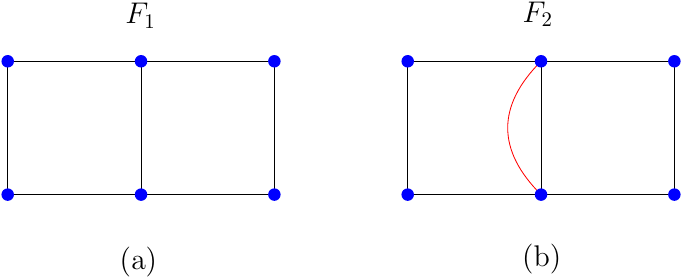}
    \caption{(a) The weak and (b) the strong edge join of two copies of $C_4$.}
    \label{fig:joins4}
\end{figure} 
Therefore, using $|E^+(C_4)|=8$ and $|\Aut(C_4)|=8$ in \eqref{gss}, 
we find that $ \sigma^2_{C_4, W}$ simplifies to 
%(recall Theorem \ref{thm:asymp_count}) 
\begin{align}%\label{eq:var_Q_0}
 \sigma^2_{C_4, W} & = \frac{1}{2} \left( t(F_1, W) - t(F_2, W) \right) \nonumber \\ 
  &  = \frac{1}{2} \left(\int_{[0, 1]^2} W(x,y)U_2^{2}(x,y) \dd x \ddy -\int_{[0, 1]^2} W^{2}(x,y)U_2^{2}(x,y) \dd x \ddy \right). 
\end{align}
Hence, 
\begin{align}%\label{eq:condition2pf}
\sigma^2_{C_4, W} = 0 \iff \int_{[0,1]^2} U_2^{2}(x,y) \left ( W(x, y) - W^{2}(x,y) \right) \dd x \ddy ,
\end{align} 
which completes the proof. 
\end{proof}

The following theorem shows that
(if we ignore the trivial cases in Remark \ref{Rtrivial}),
whenever $W$ is $C_{4}$-regular, 
the limiting distribution of $Z_n(C_4, W)$ is always
non-degenerate. Hence, for $H=C_4$, 
Theorem \ref{thm:asymp_count}(1) or (2) will give a non-degenerate limit.
By Lemma \ref{LC4}, Theorem \ref{thm:condition} 
is equivalent to the claim that
whenever $W$ is $C_{4}$-regular, 
\eqref{eq:condition1} and \eqref{eq:condition2} cannot occur
simultaneously. 
The proof of Theorem \ref{thm:condition} is given in Section \ref{sec:conditionpf}.

\begin{theorem}\label{thm:condition}
Suppose $W$ is a $C_{4}$-regular graphon with $t(C_{4},W)>0$ and\/ $W$ is not
identically $1$ a.e. 
Then,
the limit of $Z_{n}(C_{4},W)$ in \eqref{eq:limit2} is non-degenerate.
\end{theorem}

\medskip 

\noindent {\it Non-Degeneracy of the Limit for the 2-Star}:  As in Lemma
\ref{LC4}, we first derive conditions which are equivalent to degeneracy of
the two components of the
limiting distribution of $Z_n(K_{1, 2}, W)$.

\begin{lemma}\label{lem:LK12}

Suppose $W$ is a $K_{1, 2}$-regular graphon with $t(K_{1, 2},W)>0$
Then the following hold: 

\begin{alphenumerate}

\item $\Spec^-(W_{K_{1, 2}})=\emptyset$ if and only if 
\begin{align}
W(x,y)\left(d_{W}(x)+d_{W}(y)\right) + U_1(x, y)  = 3\int d_{W}^{2}(z)\dd z,  
\quad\text{a.e. }(x,y)\in [0,1]^{2} , \label{eq:K12-cond1}
\end{align} 
where $U_1(x, y)$ is as defined in \eqref{eq:Uxy}. 

\item $\sigma^2_{K_{1, 2}, W}=0$ if and only if 
\begin{align}
\int \left\{ d_W(x) d_W(y) + d_W(x)^2 \right\} W(x,y)(1-W(x,y))\dd x \dd y = 0 . \label{eq:K12-cond2}
\end{align} 

\end{alphenumerate}
As a consequence, the limit of $Z_{n}(K_{1, 2},W)$ in \eqref{eq:limit2} is degenerate if and only if  \eqref{eq:K12-cond1} and \eqref{eq:K12-cond2} hold. 
\end{lemma}

\begin{proof}
%\begin{proof}[Proof of Lemma \ref{lemma:degencond}]
From \eqref{eq:K12W}  the {2-point conditional graphon induced by $K_{1, 2}$} is given by
\begin{align}%\label{eq:cond-graphon}
    W_{K_{1, 2}}(x,y) = \frac{1}{2}\left\{W(x,y)\left(d_{W}(x)+d_{W}(y)\right) + U_1(x, y)  \right\} . 
\end{align}
Furthermore, \eqref{eq:degreeH} yields $d_{W_{K_{1, 2}}}=\frac{6}{4}t(K_{1,2},W)=\frac{3}{2}\int_0^1  d_W(x)^2\,\ddx.$ Hence, Proposition \ref{ppn2} shows that $\Spec^-(W_{K_{1, 2}})=\emptyset$ if and only if \eqref{eq:K12-cond1} holds.

Furthermore, recalling \eqref{eq:sigma_K12} we have,
\begin{align}\label{eq:sigmaK12}
    \sigma_{K_{1, 2},W}^{2} 
=2\left[ t\left(K_{1,3},W\right) + t(P_3,W)-t\bigpar{K_{1, 3}^{+},W}-t\left(P_3^{+},W\right)\right]
\end{align}
where the graphs $K_{1,3},K_{1,3}^{+},P_3$ and $P_3^{+}$ are as shown in Figure \ref{figure:example_edge_K12}. By evaluating the densities in \eqref{eq:sigmaK12}, we obtain
\begin{align}\label{eq:varK12equiv}
    \sigma_{K_{1, 2},W}^{2} =
2\int \left\{ d_W(x) d_W(y) + d_W(x)^2 \right\} W(x,y)(1-W(x,y))\dd x
  \dd y . 
\end{align}
This shows that, $\sigma_{K_{1, 2},W}^{2} =0$ equivalent to \eqref{eq:K12-cond2}.
\end{proof}

The following theorem 
is the counterpart of 
Theorem \ref{thm:condition} for $K_{1,2}$,
and shows that for $H=K_{1,2}$, 
Theorem \ref{thm:asymp_count}(1) or (2) will give a non-degenerate limit.
By Lemma \ref{LC4}, Theorem \ref{thm:K12Z} 
is equivalent to the claim that
whenever $W$ is $K_{1,2}$-regular, 
\eqref{eq:K12-cond1} and \eqref{eq:K12-cond2} cannot occur simultaneously.
The proof of Theorem \ref{thm:K12Z} is given in Section \ref{sec:K12Zpf}. 

\begin{theorem}\label{thm:K12Z}
Suppose $W$ is a $K_{1, 2}$-regular graphon with $t(K_{1, 2},W)>0$ and $W$
is not identically $1$ a.e. 
Then,
the limit of $Z_{n}(C_{4},W)$ in \eqref{eq:limit2} is non-degenerate.
\end{theorem}

\section{Proof of Theorem \ref{thm:asymp_count}}\label{sec:proof}

Fix a graphon $W \in \mathcal{W}_0$ and a non-empty simple  graph  $H=(V(H), E(H))$ with vertices labeled $V(H)=\{1, 2, \ldots, |V(H)|\}$, and recall the definition of  $X_{n}(H,W)$ from \eqref{eq:XHW}. To express $X_{n}(H,W)$ as a generalized $U$-statistic note that 
\begin{align}%\label{eq:X-f}
    X_{n}(H,W)=\sum_{1\leq i_{1}<\cdots<i_{|V(H)|}\leq n}f(U_{i_{1}},\cdots, U_{i_{|V(H)|}},Y_{i_{1}i_{2}},\cdots, Y_{i_{|V(H)|-1}i_{|V(H)|}})
\end{align}
where $\mathscr{G}_H := \mathscr{G}_H(\{1, 2, \ldots, |V(H)|\})$ and 
\begin{align}\label{eq:def_f}
    f(U_{1},\cdots, U_{|V(H)|},Y_{12},\cdots, Y_{|V(H)|-1~ |V(H)|})=\sum_{H'\in \mathscr{G}_H}\prod_{(a, b) \in E(H')}\one\left\{Y_{ab}\leq W(U_{a},U_{b}) \right\}. 
\end{align}
This is exactly in the framework of generalized $U$-statistics considered in  \cite{janson1991asymptotic}. Therefore, we can now orthogonally expand the function $f$  as a sum over subgraphs of the complete graph as explained in the section below.

\subsection{Orthogonal Decomposition of Generalized $U$-Statistics}
\label{sec:generalized_U}

We recall some notations and definitions from \cite{janson1991asymptotic}.
Suppose $\{U_{i}: 1 \leq i \leq n\}$ and $\{Y_{ij}: 1\leq i<j \leq n\}$ are
i.i.d.\ sequences of $U[0,1]$ random variables. Denote by $K_n$ the complete
graph on the set of vertices $\{1, 2, \ldots, n\}$ and let $G= (V(G), E(G))$
be a subgraph of $K_{n}$. Let $\mathcal{F}_{G}$ be the $\sigma$-algebra
generated by the collections $\{U_{i}\}_{i\in V(G)}$ and $\{Y_{ij}\}_{ij\in E(G)}$,
and let $L^{2}(G)=L^2(\cF_G)$ 
be the space of all square integrable random
variables that are functions of $\{U_{i}: 1 \leq i \leq n\}$ and $\{Y_{ij}:
1\leq i<j \leq n\}$. Now, consider the following subspace of $L^{2}(G)$: 
\begin{align}
M_{G}:=\{Z\in L^{2}(G) : \mathbb{E}[ZV]=0\text{ for every }V\in L^{2}(H)\text{ such that }H\subset G\}. 
\end{align}
(For the empty graph, $M_{\emptyset}$ is the space of all constants.) 
Equivalently,
%One important implication of the above definition is that, 
$Z\in M_{G}$ if and only if $Z\in L^{2}(G)$ and 
\begin{align}\label{eq:condition_MG}
\mathbb{E}\left[Z\mid X_{i},Y_{ij}: i \in V(H), (i, j) \in E(H)\right] = 0,
\quad \text{for all } H\subset G.
\end{align}
Then,  we have the orthogonal decomposition 
\cite[Lemma 1]{janson1991asymptotic} 
\begin{align}\label{eq:LGH}
L^{2}(G)=\bigoplus_{H\subseteq G} M_H, 
\end{align}
that is, $L^{2}(G)$ is the orthogonal direct sum of $M_{H}$ for all 
subgraphs $H\subseteq G$. 
This %The above decomposition 
allows us to decompose 
 any function in $L^{2}(G)$ as the sum of  its
projections onto $M_{H}$ for $H\subseteq G$. 
For any closed subspace $M$ of $L^2(K_n)$, denote the orthogonal projection
onto $M$ by $P_M$.
Then, in particular, for $f$ as in
\eqref{eq:def_f}, we have the  decomposition 
\begin{align}\label{eq:fH}
f=\sum_{H\subseteq G} f_{H},
\end{align}
where $f_{H}=P_{M_H}f$ is the orthogonal
projection of $f$ onto $M_{H}$. Further, for $1 \leq s \leq |V(H)|$, define
\begin{align}\label{eq:fs}
    f_{(s)}:=\sum_{H\subseteq G: |V(H)|=s} f_{H} .
\end{align}
The smallest positive $d$ such that $f_{(d)}\neq 0$ is called the {\it
  principal degree} of $f$.   The asymptotic distribution of $X_n(H, W)$
depends on the principal degree of $f$ and the geometry of the subgraphs
which appear in its decomposition.

For any graph $G\subseteq K_n$, the orthogonal projection %$P_{L^2(G)}$ 
onto $L^2(G)=L^2(\cF_G)$ equals the conditional expectation
$\E(\cdot\mid\cF_G)$, i.e.,
\begin{align}\label{pl1}
  P_{L^2(G)}=\E\sqpar{\cdot\mid\cF_G}.
\end{align}
Moreover, by \eqref{eq:LGH}, we have
\begin{align}\label{pl2}
P_{L^{2}(G)}=\sum_{H\subseteq G} P_{M_H}. 
\end{align}
The equations \eqref{pl1}--\eqref{pl2} enable us to express any $P_{M_H}$
as a linear combination of conditional expectations. We will do this
explicitly for the simplest cases in lemmas below.

\subsection{Proof of Theorem~\ref{thm:asymp_count}(1)} 

Recall the definition of the function $f$ from \eqref{eq:def_f} and consider its decomposition as in \eqref{eq:fH}. Then \eqref{eq:fs} for $s=1$ gives, 
\begin{align}\label{eq:fK}
    f_{(1)}=\sum_{a=1}^{|V(H)|}f_{K_{\{a\}}}, 
\end{align}
where $K_{\{a\}}$ is the graph with the single vertex $a$ and $f_{K_{\{a\}}}$ is the projection of $f$ onto the space $M_{K_{\{a\}}}$, for $1 \leq a \leq |V(H)|$. 
We will calculate $f_{\Ka}$ using the following lemma, which we state for 
general functions $F$.

\begin{lemma}\label{lemma:f_K_i}  For $1 \leq a \leq |V(H)|$, 
and any $F\in L^2$,
the projection of $F$ onto the space $M_{K_{\{a\}}}$ is given by
  \begin{align}\label{lPka}
F_{K_{\{a\}}}=\mathbb{E}\left[F\mid U_{a}\right]-\mathbb{E}[F].     
  \end{align}
\end{lemma}

\begin{proof} 
By \eqref{pl2} and \eqref{pl1},
\begin{align}
F_{\Ka}:= P_{ M_{\Ka}} F = P_{L^2(\Ka)}F-P_{M_\emptyset}F
= \E[F\mid U_a]-\E[F].
\end{align}
\end{proof}

Applying Lemma \ref{lemma:f_K_i} to  $f$ defined in \eqref{eq:def_f},  
we obtain
\begin{align}\label{fKa}
    f_{K_{\{a\}}}
    &=\sum_{H'\in\mathscr{G}_H}\mathbb{E}\left[\prod_{(b, c) \in
      E(H')}\one\left\{Y_{bc}\leq W(U_{b},U_{c})\right\}\,\middle|\,
      U_{a}\right]-\mathbb{E}[f] \notag\\
 &=\sum_{H'\in\mathscr{G}_H}\mathbb{E}\left[\prod_{(b, c)\in
   E(H')}W(U_{b},U_{c})\,
\middle|\, U_{a}\right]-\mathbb{E}[f]\notag\\
    &=\sum_{H'\in\mathscr{G}_H}t_{a}(U_{a},H',W)-\mathbb{E}[f],
\end{align}
where the last step follows from the definition of the 1-point conditional homomorphism density function (recall Definition \ref{defn:marked_density}). Then from \eqref{eq:fK}, 
\begin{align}\label{eq:fK_XHW}
    f_{(1)}=\sum_{a=1}^{|V(H)|}\left(\sum_{H'\in\mathscr{G}_H}t_{a}(U_{a},H',W)-\mathbb{E}[f]\right).
\end{align} 

We now proceed to
compute $\Var f_{(1)}$.

For this, we need the following combinatorial identity. 

\begin{lemma}\label{lemma:var_identity} For the vertex join operation $\bigoplus_{a,b}$ as in Definition \ref{defn:H1H2ab} the following holds: 
\begin{align}\label{eq:var_indentity} 
|\mathscr{G}_H|^{2}\sum_{1\leq a,b\leq |V(H)|}t\left(H\bigoplus_{a,b} H, W\right) 
%& =\sum_{1\leq a,b\leq |V(H)|}\sum_{H_{1},H_{2}\in\mathscr{G}_H}t\left(H_{1}\bigoplus_{a,b}H_{2},W\right) \nonumber \\ 
&=|V(H)|^{2}\sum_{H_{1},H_{2}\in\mathscr{G}_H}t\left(H_{1}\bigoplus_{1,1}H_{2},W\right). 
\end{align}
\end{lemma}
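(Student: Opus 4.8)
The plan is to establish the identity \eqref{eq:var_indentity} by a change-of-summation argument that trades the sum over ordered pairs of vertices $(a,b)$ of a single copy $H$ against the sum over ordered pairs of copies $H_1, H_2 \in \mathscr{G}_H$. First I would recall that $\mathscr{G}_H = \mathscr{G}_H(\{1, \ldots, |V(H)|\})$ is the set of all subgraphs of $K_{|V(H)|}$ isomorphic to $H$, so that by \eqref{eq:fact_aut} we have $|\mathscr{G}_H| = |V(H)|!/|Aut(H)|$, and that every $H' \in \mathscr{G}_H$ is of the form $\phi(H)$ for some permutation $\phi$ of $[|V(H)|]$, with exactly $|Aut(H)|$ permutations yielding each fixed $H'$. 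The key observation is that for any permutation $\phi$, the graphon quantity $t(\phi(H_1) \bigoplus_{\phi(a),\phi(b)} \phi(H_2), W)$ equals $t(H_1 \bigoplus_{a,b} H_2, W)$, since relabelling the vertices of both graphs by the same permutation before forming the vertex join produces an isomorphic (multi)graph, and $t(\cdot, W)$ is an isomorphism invariant.

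The main step is then a double counting. On the right-hand side, I would write each $H_i$ as $\phi_i(H)$ and sum over $\phi_1, \phi_2$, dividing by $|Aut(H)|^2$ to correct for overcounting; thus
\begin{align*}
|V(H)|^2 \sum_{H_1, H_2 \in \mathscr{G}_H} t\left(H_1 \bigoplus_{1,1} H_2, W\right) = \frac{|V(H)|^2}{|Aut(H)|^2} \sum_{\phi_1, \phi_2} t\left(\phi_1(H) \bigoplus_{1,1} \phi_2(H), W\right),
\end{align*}
where the sums run over all permutations $\phi_1, \phi_2$ of $[|V(H)|]$. Now substitute $a = \phi_1^{-1}(1)$, $b = \phi_2^{-1}(1)$: using the isomorphism invariance noted above (apply $\phi_1^{-1}$ to the first factor, which turns the identified vertex $1$ into vertex $a$, and separately track the second factor), one rewrites $t(\phi_1(H) \bigoplus_{1,1} \phi_2(H), W) = t(H \bigoplus_{a, b'} \psi(H), W)$ for appropriate $b'$ and $\psi$, and after collecting terms the sum over $\phi_1, \phi_2$ with $\phi_1^{-1}(1) = a$, $\phi_2^{-1}(1) = b$ fixed contributes $((|V(H)|-1)!)^2$ copies of a quantity equal (again by invariance, now averaging the second copy over all its relabellings fixing the joint vertex) to $\sum_{H_1', H_2' \in \mathscr{G}_H}$-type expressions. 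Matching the combinatorial prefactors — noting $|V(H)|^2 \cdot (|V(H)|!)^2 / |Aut(H)|^2$ on one side versus $|\mathscr{G}_H|^2 \cdot |Aut(H)|^2$-adjusted counts on the other, and that $|\mathscr{G}_H|^2 = (|V(H)|!)^2/|Aut(H)|^2$ — yields \eqref{eq:var_indentity}.

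The step I expect to be the main obstacle is bookkeeping the two-sided relabelling cleanly: the vertex join $H_1 \bigoplus_{a,b} H_2$ is \emph{not} symmetric in an obvious way under independent relabellings of $H_1$ and $H_2$, because the joined vertex plays a distinguished role in both. The cleanest route is probably to reduce everything to a canonical form — fix the first copy to be $H$ itself with join-vertex $a$ ranging over $V(H)$, and let the second copy range over all of $\mathscr{G}_H$ with join-vertex $b$ ranging over $V(H)$ — and then verify that each of the $|V(H)|^2 |\mathscr{G}_H|^2$ terms in the right-hand sum (indexed by $(H_1, v_1, H_2, v_2)$ with $v_i$ a vertex of the $i$th copy) maps, under the group action, to exactly $|Aut(H)|^2$ terms of the left-hand sum $\sum_{a,b} t(H \bigoplus_{a,b} H, W)$ indexed over $\mathscr{G}_H \times \mathscr{G}_H$ implicit in squaring, and conversely. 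Once this orbit-counting is pinned down, the identity follows by comparing cardinalities; the graphon-theoretic content is entirely contained in the isomorphism invariance of $t(\cdot, W)$.
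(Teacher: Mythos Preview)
Your plan rests on the right ingredients --- isomorphism invariance of $t(\cdot,W)$ and a relabelling/double-counting argument --- and these are exactly what the paper uses. However, you are trying to pass directly from one side of \eqref{eq:var_indentity} to the other, and as you yourself note, the two-sided bookkeeping then becomes awkward (your phrases ``after collecting terms'' and ``matching the combinatorial prefactors'' are placeholders for work not yet done, and the orbit-counting description at the end is still imprecise about which set is acting on which).

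The paper avoids all of this by introducing the \emph{intermediate} double sum
\[
S \;=\; \sum_{1\le a,b\le |V(H)|}\ \sum_{H_1,H_2\in\mathscr{G}_H} t\!\left(H_1\bigoplus_{a,b}H_2,\,W\right)
\]
and evaluating it in two ways. First, for each fixed $(a,b)$ pick permutations $\phi_a,\phi_b$ of $[|V(H)|]$ with $\phi_a(a)=\phi_b(b)=1$; then $(H_1,H_2)\mapsto(\phi_a(H_1),\phi_b(H_2))$ is a bijection of $\mathscr{G}_H^2$ and carries $H_1\bigoplus_{a,b}H_2$ to $\phi_a(H_1)\bigoplus_{1,1}\phi_b(H_2)$, so the inner sum over $(H_1,H_2)$ is independent of $(a,b)$ and $S=|V(H)|^2\sum_{H_1,H_2}t(H_1\bigoplus_{1,1}H_2,W)$. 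Second, for each fixed $(H_1,H_2)$ pick isomorphisms $\phi_i$ with $\phi_i(H_i)=H$; then $(a,b)\mapsto(\phi_1(a),\phi_2(b))$ is a bijection of $[|V(H)|]^2$ and carries $H_1\bigoplus_{a,b}H_2$ to $H\bigoplus_{\phi_1(a),\phi_2(b)}H$, so the inner sum over $(a,b)$ is independent of $(H_1,H_2)$ and $S=|\mathscr{G}_H|^2\sum_{a,b}t(H\bigoplus_{a,b}H,W)$. Equating the two evaluations of $S$ gives \eqref{eq:var_indentity} immediately, with no $|Aut(H)|$ factors to track and no orbit counting. I recommend you adopt this intermediate-sum organization; it turns the ``main obstacle'' you flag into two one-line bijections.
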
 

\begin{proof} 
For any permutation $\phi: V(H)\rightarrow V(H)$,
we define the permuted graph $\phi(H):=(\phi(V(H)), \phi(E(H)))$, where
$\phi(V(H))=\{\phi(a): 1\leq a \leq |V(H)|\}$ and $\phi(E(H))=\{(\phi(a),
\phi(b)):  (a, b) \in E(H)\}$. 

First, fix $(a,b)\in V(H)^{2}$ and consider two permutations, $\phi_{a}: V(H)\rightarrow V(H)$ and $\phi_{b}: V(H)\rightarrow V(H)$ such that $\phi_{a}(a)=\phi_{b}(b)=1$. Then 
\begin{align}\label{eq:abH1H2}
    \sum_{1\leq a,b\leq |V(H)|}\sum_{H_{1},H_{2}\in\mathscr{G}_H}t\left(H_{1}\bigoplus_{a,b}H_{2},W\right)
    &=\sum_{1\leq a,b\leq |V(H)|}\sum_{H_{1},H_{2}\in\mathscr{G}_H}t\left(\phi_{a}(H_{1})\bigoplus_{1,1}\phi_{b}(H_{2}),W\right) \nonumber \\
    %&=\sum_{1\leq a,b\leq r}\sum_{\Psi_{ab}(H_{1},H_{2})\in\mathscr{G}_H}t\left(\phi_{a}(H_{1})\bigoplus_{1,1}\phi_{b}(H_{2}),W\right)\\
    &=\sum_{1\leq a,b\leq |V(H)|}\sum_{H_{1},H_{2}\in\mathscr{G}_H}t\left(H_{1}\bigoplus_{1,1}H_{2},W\right) \nonumber \\
    &=|V(H)|^{2}\sum_{H_{1},H_{2}\in\mathscr{G}_H}t\left(H_{1}\bigoplus_{1,1}H_{2},W\right), 
\end{align}
where the second equality follows, since the map $(H_1, H_2) \rightarrow (\phi_{a}(H_{1}),\phi_{b}(H_{2}))$ is a bijection from $\mathscr{G}_H^{2}$ to $\mathscr{G}_H^{2}$, for all $1 \leq a, b \leq |V(H)|$.

Next, fix $H_{1},H_{2}\in\mathscr{G}_H$. Then consider isomorphisms $\phi_{1}, \phi_{2} : V(H)\rightarrow V(H) $ such that $\phi_{1}(H_{1})=H$ and $\phi_{2}(H_{2})=H$. Thus,
\begin{align}\label{eq:H1H2GH}
    \sum_{H_{1},H_{2}\in\mathscr{G}_H}\sum_{1\leq a, b\leq |V(H)|}t\left(H_{1}\bigoplus_{a,b}H_{2},W\right)
    &=\sum_{H_{1},H_{2}\in\mathscr{G}_H}\sum_{1\leq a, b\leq |V(H)|}t\left(H\bigoplus_{\phi_{1}(a),\phi_{2}(b)}H,W\right) \nonumber \\
    &=\sum_{H_{1},H_{2}\in\mathscr{G}_H}\sum_{1\leq a, b\leq |V(H)|}t\left(H\bigoplus_{a,b}H , W \right) \nonumber \\
    &=|\mathscr{G}_H|^{2}\sum_{1\leq a, b\leq |V(H)|}t\left(H\bigoplus_{a,b}H , W\right).
\end{align}
Here,  the second equality follows since $(a, b) \rightarrow (\phi_{1}(a),\phi_{2}(b))$ is a bijection from $V(H)^2$ to $V(H)^2$. 

Combining \eqref{eq:abH1H2} and \eqref{eq:H1H2GH} the identity in \eqref{eq:var_indentity} follows. 
\end{proof}

\begin{lemma}\label{Lvarf1}
\begin{align}\label{eq:variancef1} 
    \Var[f_{(1)}]&
 =|V(H)| |\mathscr{G}_H|^{2} \left\{ \frac{1}{|V(H)|^2}
\sum_{1\leq a,b\leq |V(H)|}t\left(H\bigoplus_{a,b}H,W\right) - t(H,W)^{2}  \right\}
 \notag\\
&=
\frac{|V(H)|!\,(|V(H)|-1)!}{|\Aut(H)|^2}  \left\{ 
\sum_{1\leq a,b\leq |V(H)|}t\left(H\bigoplus_{a,b}H,W\right) 
-{|V(H)|^2} t(H,W)^{2}  \right\}
.\end{align}
\end{lemma}

\begin{proof}
Recalling \eqref{eq:fK_XHW} gives, 
since the terms in the outer sum there are independent,
\begin{align}\label{eq:varf1}
    \text{Var}[f_{(1)}]=\sum_{a=1}^{|V(H)|}\text{Var}\left[\sum_{H'\in\mathscr{G}_H}t_{a}(U_{a},H',W)\right].
\end{align}
Consider the term corresponding to $a=1$ in the sum above. 
 For any $H_{1},H_{2}\in\mathscr{G}_H$,
\begin{align}
    \mathbb{E}\left[t_{1}(U_{1},H_{1},W)t_{1}(U_{1},H_{2},W)\right] 
    &=t\left(H_{1}\bigoplus_{1,1}H_{2},W\right) . 
\end{align}
Hence,
\begin{align}\label{eq:varf11}
    \text{Var}\left[\sum_{H'\in\mathscr{G}_H}t_{1}(U_{1},H',W)\right]
    &=\sum_{H_{1},H_{2}\in\mathscr{G}_H}\text{Cov}\left[t_{1}(U_{1},H_{1},W),t_{1}(U_{1},H_{2},W)\right] \nonumber \\
    &=\sum_{H_{1},H_{2}\in\mathscr{G}_H}\left(t\left(H_{1}\bigoplus_{1,1}H_{2},W\right)-t(H,W)^{2}\right) . 
\end{align}
Now, an argument similar to Lemma \ref{lemma:var_identity} shows that 
\begin{align}
    \sum_{H'\in\mathscr{G}_H}t_{a}(x,H',W)=\sum_{H'\in\mathscr{G}_H}t_{b}(x,H',W) , 
    \end{align}
for all $x\in [0,1]$ and $1\leq a, b\leq |V(H)|$. 
%This implies that $\sum_{H'\in\mathscr{G}_H}t_{i}(U_{i},H',W)$ are equal in distribution for all $1\leq i\leq r$. 
Hence, \eqref{eq:varf1} and \eqref{eq:varf11} imply
\begin{align}\label{eq:variancef1a} 
    \text{Var}[f_{(1)}]&= |V(H)|
                         \sum_{H_{1},H_{2}\in\mathscr{G}_H}\left(t\left(H_{1}\bigoplus_{1,1}H_{2},W\right)-t(H,W)^{2}\right),
% \nonumber \\
%    & =|V(H)| |\mathscr{G}_H|^{2} \left\{ \frac{1}{|V(H)|^2}\sum_{1\leq a,b\leq |V(H)|}t\left(H\bigoplus_{a,b}H,W\right) - t(H,W)^{2} \right\}, 
\end{align}
and the result follows by Lemma \ref{lemma:var_identity},
using \eqref{eq:fact_Aut} for the second equality.
\end{proof}

Note that $\E f_{(1)}= 0$ by \eqref{eq:fs}.
Hence $\Var \fett=0 $ if and only if $\fett=0$ a.s.

\begin{lemma}\label{lm:nregularf_WH}
$\Var\fett=0$ if and only if\/ $W$ is  $H$-regular.
\end{lemma}

\begin{proof}
Lemma \ref{Lvarf1} shows that $\text{Var}[f_{(1)}]$ is zero if and only if 
\begin{align}\label{eq:reg_var}
    \frac{1}{|V(H)|^{2}}\sum_{1\leq a,b\leq |V(H)|}t\left(H\bigoplus_{a,b},W\right)=t(H,W)^{2} . 
\end{align}
Now observe,
\begin{align}
    \sum_{1\leq a,b\leq |V(H)|}t\left(H\bigoplus_{a,b}H,W\right)
    &=\sum_{1\leq a,b\leq |V(H)|}\int t_{a}(x,H,W)t_{b}(x,H,W)\dd x \notag\\
%    &=\int \sum_{1\leq a,b\leq r} t_{a}(x,H,W)t_{b}(x,H,W)\dd x\notag\\
    &=\int \left(\sum_{1\leq a\leq |V(H)|}t_{a}(x,H,W)\right)^{2}\dd x .
\end{align}
Thus \eqref{eq:reg_var} becomes,
using also \eqref{intta},
\begin{align}\label{eq:reg_var_eq}
    \int \left(\sum_{1\leq a\leq |V(H)|}t_{a}(x,H,W)\right)^{2}\dd x-\left(
  \int \sum_{1\leq a\leq |V(H)|}t_{a}(x,H,W)\right)^{2} \dd x = 0 
%\iff \text{Var}\left[\Delta(U)\right]=0, 
,\end{align}
which is equivalent to
$\text{Var}\left[\Delta(U)\right]=0$, 
where we define 
\begin{align}
\Delta(x):=\sum_{1\leq a\leq |V(H)|}t_{a}(x,H,W)  
\end{align}
and let
$U\sim\operatorname{Uniform}[0,1]$. 
Hence, $\text{Var}[f_{(1)}]=0$ if and only if
$\Delta(U)$ is constant a.s. Therefore, since $\mathbb{E}\Delta(U)=
|V(H)| t(H,W)$, we see that  $\text{Var}[f_{(1)}]=0$ if and only if  
\begin{align}\label{regH}
     \frac{1}{|V(H)|}\sum_{1\leq a\leq |V(H)|}t_{a}(x,H,W)=t(H,W) \text{ for almost every } x \in [0, 1]. 
    \end{align}
By Definition \ref{defn:regular},  
\eqref{regH} says that $W$ is $H$-regular. 
\end{proof}

\begin{proof}[Proof of Theorem~\ref{thm:asymp_count}(1)]
Lemma \ref{lm:nregularf_WH} shows that 
if $W$ is not $H$-regular, then
the principal degree of $f$ is 1. 
Thus, \cite[Theorem 1]{janson1991asymptotic} yields
   \begin{align}\label{lvb1}
        \frac{X_{n}(H,W)-\frac{(n)_{|V(H)|}}{|\Aut(H)|}t(H,W)}{n^{|V(H)|-\frac{1}{2}}}  \overset{D}{\rightarrow}\mathsf{N}(0, \tau^{2}),
    \end{align}
    where,
using also \eqref{eq:variancef1} and \eqref{ttau},
    \begin{align}\label{lvb2}
    \tau^{2} & = \frac{1}{|V(H)|!\,(|V(H)|-1)!}\text{Var}[f_{(1)}] 
= \tau^2_{H, W}
%\nonumber \\ 
%        & =  \frac{1}{|\Aut(H)|^{2}}\left[\sum_{1\leq a, b\leq |V(H)|}t\left(H\bigoplus_{a,b}H,W\right) - |V(H)|^{2}t(H,W)^{2}\right] := \tau^2_{H, W}
.\end{align}
%using \eqref{eq:variancef1} and \eqref{ttau} in the last step.
This completes the proof of Theorem~\ref{thm:asymp_count}(1)
when $W$ is not $H$-regular.

In fact, \eqref{lvb1}--\eqref{lvb2} hold also when $W$ is $H$-regular, with
$\fett=0$ and $\tau^2=0$. Although this case is not included in the
statement of \cite[Theorem 1]{janson1991asymptotic}, it follows by its proof, as
a consequence of \cite[Lemma 2]{janson1991asymptotic}; see also 
\cite[Corollary 11.36]{SJIII}.
Consequently,  Theorem~\ref{thm:asymp_count}(1) holds for any 
$W \in \mathcal{W}_0$. 
\end{proof}

\subsection{Proof of Theorem~\ref{thm:asymp_count}(2)} 

In this case, $W$ is $H$-regular, hence $f_{(1)}\equiv 0$ by Lemma
\ref{lm:nregularf_WH}.
Therefore, we consider $f_{(2)}$ (recall \eqref{eq:fs}) which can be written as 
\begin{align}
 f_{(2)}=\sum_{1\leq a< b\leq |V(H)|} \left( f_{\Kab} +f_{\Qab} \right) , 
\end{align}
where $\Kab = (\{a, b\}, \emptyset)$ is the graph with two vertices
$a$ and $b$ and no edges, and $\Qab = (\{a, b\}, \{ (a, b) \} )$ is
the complete graph with vertices $a$ and $b$. 
As for $\fett$, we have $\E\ftva=0$, and thus $\Var \ftva=0\iff \ftva=0$ a.s.

If $\Var\ftva\neq0$, then $f$ has principal degree 2, and we can apply
\cite[Theorem 2]{janson1991asymptotic},
which shows that
   \begin{align}\label{beet1}
        \frac{X_{n}(H,W)-\frac{(n)_{|V(H)|}}{|\Aut(H)|}t(H,W)}{n^{|V(H)|-{1}}}
     \overset{D}{\rightarrow}
\gs Z+\sum_{\gl\in\gL}\gl(Z_\gl^2-1),
    \end{align}
    where
$Z$ and $\set{Z_\gl}_{\gl\in\gL}$ are independent standard Gaussians,
\begin{align}\label{beet2}
  \gss=\frac{1}{2(|V(H)-2)!^2}\E \bigsqpar{f_{\Qij}^2}
\end{align}
and $\gL$ is the multiset of (non-zero) eigenvalues of
a certain integral operator $T$.

Moreover, if $\Var\ftva=0$, so $\ftva=0$ a.s., then the conclusion of
\cite[Theorem 2]{janson1991asymptotic} still holds (with a trivial limit 0),
again as a consequence of 
\cite[Lemma 2]{janson1991asymptotic}.
(See also the more general \cite[Theorem 11.35]{SJIII}.)
Hence, \eqref{beet1} holds in any case.

It remains to show that $\gss=\gss_{H,W}$ in \eqref{gss}, and that $\gL$
equals
$\Spec^-(W_H)$; then \eqref{beet1} yields \eqref{tt2}.
We begin by finding $f_\Kab$ and $f_\Qab$.
 
\begin{lemma}\label{lemma:Kab_projection} 
For $1\leq a< b\leq |V(H)|$ and any $F\in L^2$,
the projection of $f$ onto the space $M_{\Kab}$ is given by
\begin{align}\label{lPkab}
    F_{\Kab}=\mathbb{E}\left[F\mid U_{a},U_{b}\right]-\mathbb{E}[F\mid U_{a}]-\mathbb{E}[F\mid U_{b}]+\mathbb{E}[F] .  
\end{align}
\end{lemma} 

\begin{proof}
By \eqref{pl2}, % and \eqref{lPka},
\begin{align}
F_{\Kab} := 
P_{ M_{\Kab}} F &= 
P_{L^2(\Kab)}F - P_{M_\Ka}F - P_{M_{\Kb}}F -P_{M_\emptyset}F
\notag\\&
 = P_{L^2(\Kab)}F - P_{L^2(\Ka)}F - P_{L^2(\Kb)}F +P_{M_\emptyset}F
\end{align}
and the result follows by \eqref{pl1}.
\end{proof}

\begin{lemma}\label{lemma:Qab_projection} 
For $1\leq a< b\leq |V(H)|$ and any $F\in L^2$,
the projection of $f$ onto the space $M_{\Qab}$ is given by
\begin{align}\label{lPqab}
    F_{\Qab}=\mathbb{E}\left[F\mid U_{a},U_{b},Y_{ab}\right]
- \mathbb{E}\left[F\mid U_{a},U_{b}\right].
\end{align}
\end{lemma} 

\begin{proof}
  The subgraphs of $\Qab$ are $\Kab$, $\Ka$, $\Kb$ and $\emptyset$, and thus
\eqref{pl2} yields
\begin{align}
F_{\Qab} := 
P_{ M_{\Qab}} F& = 
P_{L^2(\Qab)}F -P_{M_\Kab}F- P_{M_\Ka}F - P_{M_{\Kb}}F -P_{M_\emptyset}F
\notag\\&
 = P_{L^2(\Qab)}F - P_{L^2(\Kab)}F
,\end{align}
and the result follows by \eqref{pl1}.
\end{proof}

Specializing to  $f$ defined in \eqref{eq:def_f},  
we  found $f_\Ka=\E \sqpar{f\mid U_a}-\E f$ in \eqref{fKa}.
Furthermore, the same argument yields, recalling \eqref{ta} and \eqref{tab-},
\begin{align}\label{efuu}
  \E\sqpar{f\mid U_a,U_b}=
\sum_{H'\in\mathscr{G}_H}t_{a,b}(U_{a},U_b,H',W)
\end{align}
and  %, recalling \eqref{tab-},
\begin{align}\label{efuuy}
    \E\sqpar{f\mid U_a,U_b,Y_{ab}}
&= \sum_{H'\in\mathscr{G}_H}t^-_{a,b}(U_{a},U_b,H',W)  Z_{H',\setab}(Y_{ab},U_{a},U_{b}),
\end{align}
where
\begin{align}\label{ZH'}
    Z_{H',\setab}(Y_{ab},U_{a},U_{b})
    :=
    \begin{cases}
        \one\{Y_{ab}\leq W(U_{a},U_{b})\} & \text{ if } (a, b) \in E(H'),\\
1 & \text{ otherwise. } 
    \end{cases}
\end{align}
Let also
\begin{align}
    \overline W_{H',  \{a, b\}}(x, y) :=
    \begin{cases}
    W(x, y) & \text{ if } (a, b)\in E(H'),\\
    1 & \text{ otherwise},
    \end{cases}
\end{align} 
and
$\mathscr{G}_{H, \{a, b\}}:=\{H'\in \mathscr{G}_H: (a, b) \in E(H')\}$. 
Then, \eqref{lPqab}, \eqref{efuu} and \eqref{efuuy} yield,
using also \eqref{tab-tab},
\begin{align}\label{rom}
  f_\Qab &=
\sum_{H'\in\mathscr{G}_H}t^-_{a,b}(U_{a},U_b,H',W) 
\Bigpar{ Z_{H',\setab}(Y_{ab},U_{a},U_{b}) - \overline W_{H',  \{a, b\}}(U_a, U_b) }
\notag\\
&=
\sum_{H'\in\mathscr{G}_{H,\setab}} t^-_{a,b}(U_{a},U_b,H',W)  
\Bigpar{\one\{Y_{ab}\leq W(U_{a},U_{b})\} -W(U_a,U_b)}.
\end{align}

To compute the variance of $f_{\Qij}$, we 
 recall the notions of weak and strong edge joins from
Definition~\ref{defn:H1H2ab},
and
introduce a few
definitions. 
Let $V_H^2=\{(a,b)\in V(H)^{2}: a\neq b\}$. For $(a,b),(c,d)\in V_H^2$ define
\begin{align}\label{def:tilde_minus}
    \underline{t}\left(H_{1}\bigominus_{(a,b),(c,d)} H_{2},W\right) =  
    t\left(H_{1}\bigominus_{(a,b),(c,d)} H_{2},W\right) \bm 1\{(a,b) \in E^{+}(H_{1}) \text{ and }(c,d) \in E^{+}(H_{2})\}
 \end{align} 
and similarly, 
\begin{align}\label{def:tilde_plus}
\underline{t}\left(H_{1}\bigoplus_{(a,b),(c,d)} H_{2},W\right) = 
t\left(H_{1}\bigoplus_{(a,b),(c,d)} H_{2},W \right) \bm 1\{(a,b) \in E^{+}(H_{1}) \text{ and }(c,d) \in E^{+}(H_{2})\} .
\end{align}
Then we have the following identities, similar to Lemma
\ref{lemma:var_identity}: 
\begin{lemma}\label{lemma:edge_join}
Let $V_H^2$ be as defined above,
and let $K_H:=|V_H^2|^2=|V(H)|^{2}(|V(H)|-1)^{2}$. 
Then 
\begin{align}\label{eq:wjoin_indentity}
%    \sum_{(a,b),(c,d)\in V_H^2}\sum_{H_{1},H_{2}\in\mathscr{G}_H}
%    \underline{t} \left(H_{1}\bigominus_{(a,b),(c,d)} H_{2},W\right) 
 K_H \sum_{H_{1},H_{2}\in\mathscr{G}_H} \underline{t}
  \left(H_{1}\bigominus_{(1,2),(1,2)} H_{2},W\right) 
& = |\mathscr{G}_H|^{2}\sum_{(a,b),(c,d)\in V_H^2 } \underline{t} \left(H\bigominus_{(a,b),(c,d)}  H,W\right). 
\end{align} 
and, similarly,
\begin{align}\label{eq:sjoin_identity}
%    \sum_{(a,b),(c,d)\in V_H^2}\sum_{H_{1},H_{2}\in\mathscr{G}_H}
%    \underline{t} \left(H_{1}\bigoplus_{(a,b),(c,d)} H_{2},W\right) 
 K_H \sum_{H_{1},H_{2}\in\mathscr{G}_H} \underline{t}
  \left(H_{1}\bigoplus_{(1,2),(1,2)} H_{2},W\right) 
& = |\mathscr{G}_H|^{2}\sum_{(a,b),(c,d)\in V_H^2 } \underline{t} \left(H\bigoplus_{(a,b),(c,d)}  H,W\right). 
\end{align} 
\end{lemma}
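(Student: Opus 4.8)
The plan is to prove Lemma \ref{lemma:edge_join} by the same bijective/symmetrization argument used for Lemma \ref{lemma:var_identity}, but now carefully tracking the indicator functions that appear in the definitions \eqref{def:tilde_minus} and \eqref{def:tilde_plus}. I will only write out the argument for \eqref{eq:wjoin_indentity}, since \eqref{eq:sjoin_identity} follows verbatim with $\bigominus$ replaced by $\bigoplus$. The key observation is that the quantity $\underline{t}(H_1 \bigominus_{(a,b),(c,d)} H_2, W)$ is defined to vanish unless $(a,b) \in E(H_1)$ and $(c,d) \in E(H_2)$, so the left-hand sum is really over the set of tuples $(a,b,c,d,H_1,H_2)$ with $a<b$, $c<d$, $(a,b)\in E(H_1)$, $(c,d)\in E(H_2)$; this is the set on which the underlying (non-underlined) $t$-value is what it should be, and the indicator just restricts the index set.

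First I would fix $H_1, H_2 \in \mathscr{G}_H$ and choose isomorphisms $\phi_1, \phi_2 : [|V(H)|] \to [|V(H)|]$ with $\phi_1(H_1) = H$ and $\phi_2(H_2) = H$. For any pair of edges $(a,b) \in E(H_1)$, $(c,d) \in E(H_2)$, the identification of vertices $a\sim c$ and $b\sim d$ that defines the weak edge join is, after relabeling by $\phi_1$ on the $H_1$-side and $\phi_2$ on the $H_2$-side, the identification of $\phi_1(a)\sim\phi_2(c)$ and $\phi_1(b)\sim\phi_2(d)$ in two copies of $H$; hence $\underline{t}(H_1\bigominus_{(a,b),(c,d)} H_2, W) = \underline{t}(H \bigominus_{(\phi_1(a),\phi_1(b)),(\phi_2(c),\phi_2(d))} H, W)$, where I should note that $(\phi_1(a),\phi_1(b))$ need not be increasing, so strictly I interpret the join symbol as unordered on each pair, or equivalently sort each pair — the $t$-value is insensitive to this because the edge join only depends on which two vertices are identified. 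The crucial point is that as $(a,b)$ ranges over $E(H_1)$ (equivalently over the edges of the complete graph on $[|V(H)|]$ that happen to lie in $E(H_1)$), the image $(\phi_1(a),\phi_1(b))$ ranges exactly over $E(H)$, a bijection; and likewise for the $H_2$-side. Because the indicator $\bm 1\{(a,b)\in E(H_1)\}$ in the definition \emph{exactly} cuts the index set $V_H^2$ down to (the sorted images of) $E(H_1)$, summing over $(a,b),(c,d)\in V_H^2$ with the underline is the same as summing over $E(H_1)\times E(H_2)$, and after applying $\phi_1,\phi_2$ this becomes summing over $E(H)\times E(H)$, i.e. over $(a,b),(c,d)\in V_H^2$ with the underline for two copies of $H$. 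Therefore
\begin{align*}
\sum_{(a,b),(c,d)\in V_H^2} \underline{t}\left(H_1\bigominus_{(a,b),(c,d)} H_2, W\right) = \sum_{(a,b),(c,d)\in V_H^2} \underline{t}\left(H\bigominus_{(a,b),(c,d)} H, W\right),
\end{align*}
and summing this over all $H_1, H_2 \in \mathscr{G}_H$ — the right-hand side being independent of $H_1,H_2$ — produces exactly \eqref{eq:wjoin_indentity}, with the factor $|\mathscr{G}_H|^2$.

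The main obstacle, and the one place care is genuinely needed, is the bookkeeping around the non-ordered pairs and the role of the indicator: one must check that after relabeling by an arbitrary isomorphism $\phi_1$ the edge $(a,b)$ with $a<b$ can map to a pair that is out of order, and argue that the join operation (and hence $t(\cdot, W)$) is invariant under swapping the two endpoints of the identified edge — so that reindexing by "sort the pair" is a bijection of $V_H^2$-indexed sums that preserves every summand. Equivalently, one can sidestep this by defining the joins on \emph{unordered} pairs throughout and noting $|E(H_1)| = |E(H_2)| = |E(H)|$ with the bijections $E(H_i)\to E(H)$ induced by $\phi_i$; the indicator in \eqref{def:tilde_minus}--\eqref{def:tilde_plus} then precisely matches the summation range. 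For \eqref{eq:sjoin_identity} the identical argument applies since the strong edge join is again determined solely by the pair of identified vertex-pairs, and keeping both edges (rather than one) is a symmetric operation under the same relabelings.
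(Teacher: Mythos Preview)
Your argument has a genuine gap: the per-$(H_1,H_2)$ identity
\[
\sum_{(a,b),(c,d)\in V_H^2}\underline{t}\Big(H_1\bigominus_{(a,b),(c,d)} H_2,W\Big)
=\sum_{(a,b),(c,d)\in V_H^2}\underline{t}\Big(H\bigominus_{(a,b),(c,d)} H,W\Big)
\]
is \emph{false} in general. Take $H=K_{1,2}$ with edges $\{(1,2),(2,3)\}$ (center $2$) and let $H_1=H_2$ be the isomorphic copy with edges $\{(1,2),(1,3)\}$ (center $1$). A direct check shows that the left sum equals $4\,t(K_{1,3},W)$ while the right sum equals $2\,t(K_{1,3},W)+2\,t(P_3,W)$; these differ for generic $W$. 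The error is exactly the step you flagged but dismissed: when $\phi_1(a)>\phi_1(b)$, ``sorting the pair'' changes which endpoint of the first edge is identified with which endpoint of the second, and the weak (or strong) edge join is \emph{not} invariant under that swap. The join depends on the matching of endpoints, not merely on the unordered edges, so passing to unordered edges does not sidestep the issue either.

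The paper's proof is organized precisely to avoid this pitfall. Instead of fixing $(H_1,H_2)$ and reindexing over $(a,b),(c,d)$, it fixes $(a,b),(c,d)\in V_H^2$ and reindexes over $(H_1,H_2)$ via permutations $\phi_{(a,b)},\phi_{(c,d)}$ chosen so that $\phi_{(a,b)}(a)=1$, $\phi_{(a,b)}(b)=2$ (and similarly for $(c,d)$). Because $a<b$ and $1<2$, these permutations carry the ordered pair $(a,b)$ to the ordered pair $(1,2)$ \emph{without any sorting}, so the endpoint matching in the join is preserved term-by-term and the bijection on $\mathscr{G}_H^2$ is value-preserving. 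This yields the intermediate equality \eqref{eq:wjoin_ab}, and the second half of the argument then runs in the reverse direction. To repair your approach you would need to work with \emph{directed} edges throughout (where $\phi_i$ does induce a value-preserving bijection $\vec E(H_i)\to\vec E(H)$) and then separately argue that the ``straight'' and ``crossed'' contributions agree after summing over $\mathscr{G}_H^2$; this is an additional, nontrivial step.
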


\begin{proof} We will first show that 
\begin{align}\label{eq:wjoin_ab}
  \sum_{(a,b),(c,d)\in V_H^2}\sum_{H_{1},H_{2}\in\mathscr{G}_H} \underline{t} \left(H_{1}\bigominus_{(a,b),(c,d)} H_{2},W\right) &= K_H \sum_{H_{1},H_{2}\in\mathscr{G}_H} \underline{t} \left(H_{1}\bigominus_{(1,2),(1,2)} H_{2},W\right) 
.\end{align} 
For this consider permutations $\phi_{(a,b)}, \phi_{(c,d)} : V(H) \rightarrow V(H)$ such that $\phi_{(a,b)}(a)=1$ and $\phi_{(a,b)}(b)=2$, and $\phi_{(c,d)}(c)=1$ and $\phi_{(c, d)}(d)=2$. Then
\begin{align}
    \sum_{(a,b),(c,d)\in V_H^2}\sum_{H_{1},H_{2}\in\mathscr{G}_H} \underline{t} \left(H_{1}\bigominus_{(a,b),(c,d)} H_{2},W\right)
    &=\sum_{V_H^2\times V_{H}^2}\sum_{\mathscr{G}_H^{2}} \underline{t}
      \left(\phi_{(a,b)}(H_{1})\bigominus_{(1,2),(1,2)} \phi_{(c,d)}(H_{2}),W\right)
\notag\\
    &= K_H \sum_{\mathscr{G}_H^{2}} \underline{t} \left(H_{1}\bigominus_{(1,2),(1,2)} H_{2},W\right),
\end{align}
where the last equality follows from the observation that $(H_{1},H_{2}) \rightarrow (\phi_{(a,b)}(H_{1}),\phi_{(c,d)}(H_{2}))$ is an bijection from $\mathscr{G}_H^2 $ to $\mathscr{G}_H^2$, for all $(a,b),(c,d)\in V_H^2$.

Now by considering isomorphisms $\phi_{1}$ and $\phi_{2}$ such that $\phi_{1}(H_{1})=H$ and $\phi_{2}(H_{2})=H$, a similar argument as above shows that 
\begin{align}\label{eq:wjoin_H12}
    \sum_{(a,b),(c,d)\in V_H^2}\sum_{H_{1},H_{2}\in\mathscr{G}_H} \underline{t}\left(H_{1}\bigominus_{(a,b),(c,d)} H_{2},W\right)
   % &=\frac{r^{2}(r-1)^{2}}{4}\sum_{H_{1},H_{2}\in\mathscr{G}_H}\widetilde{t}\left(H_{1}\bigominus_{(1,2),(1,2)} H_{2},W\right)\\
    &=|\mathscr{G}_H|^{2}\sum_{(a,b),(c,d)\in V_H^2 } \underline{t} \left(H\bigominus_{(a,b),(c,d)}  H,W\right).
\end{align} 

Combining \eqref{eq:wjoin_ab} and \eqref{eq:wjoin_H12} yields the identity
\eqref{eq:wjoin_indentity}. The identity \eqref{eq:sjoin_identity} follows
by the same proof with only notational differences.
\end{proof}

With the above definitions and identities we now proceed to compute the variance of $f_{\Qij}$. 

\begin{lemma}\label{lm:Qab_variance}
We have
%For $f_{\Qij}$ as defined in \eqref{eq:Qab}, 
\begin{align}\label{qabvar}
   \Var[f_{\Qij}]=\frac{(|V(H)|-2)!^{2}}{|\Aut(H)|^{2}}\sum_{(a,b),(c,d)\in E^{+}(H)}\lrpar{t\left(H\bigominus_{(a,b),(c,d)} H,W\right)-t\left(H\bigoplus_{(a,b),(c,d)} H,W\right)}
.\end{align}
\end{lemma}
\begin{proof}
We specialize \eqref{rom} to $(a,b)=(1,2)$, and write for convenience
\begin{align}
 h(U_{1},U_{2}, H_{1}, H_{2},W)
:=\tij^-(U_{1},U_{2},H_{1},W)\, \tij^-(U_{1},U_{2},H_{2},W).
\end{align} 
This yields, 
\begin{align}\label{eq:varianceQ12}
    \mathbb{E}[f_{\Qij}^2]
    & = \sum_{H_{1},H_{2}\in\mathscr{G}_{H, \{1, 2\}}}\mathbb{E} \left[ h(U_{1},U_{2}, H_{1}, H_{2},W) \bigpar{\one\{Y_{12}\leq W(U_{1},U_{2})\} - W(U_1, U_2) }^2 \right] \nonumber \\
    & = \sum_{H_{1},H_{2}\in\mathscr{G}_{H, \{1, 2\}}}\mathbb{E}\bigsqpar{ h(U_{1},U_{2},H_{1}, H_{2}, W) W(U_{1},U_{2}) (1-W(U_{1},U_{2}))} \nonumber \\
& =\sum_{H_{1},H_{2}\in\mathscr{G}_{H, \{1, 2\}}}
\lrpar{
t\left(H_{1}\bigominus_{(1,2),(1,2)} H_{2},W\right)
-t\left(H_{1}\bigoplus_{(1,2),(1,2)} H_{2},W\right) }.  
\end{align}
Now, using the notations introduced in \eqref{def:tilde_minus} and \eqref{def:tilde_plus}, the identity \eqref{eq:varianceQ12} can be written as 
\begin{align}
    \mathbb{E}[f_{\Qij}^2] & 
= \sum_{H_{1},H_{2} \in \mathscr{G}_H}\lrpar{\underline{t}\left(H_{1}\bigominus_{(1,2),(1,2)} H_{2},W\right)-\underline{t}\left(H_{1}\bigoplus_{(1,2),(1,2)} H_{2},W\right)}  
\notag\\
    & = \frac{(|V(H)|-2)!^2}{|\Aut(H)|^2}\sum_{ (a,b),(c,d)\in V_H^2 }\lrpar{ \underline{t}\left(H\bigominus_{(a,b),(c,d)} H,W\right) - \underline{t}\left(H\bigoplus_{(a,b),(c,d)} H,W\right) }
\notag\\
    & = \frac{(|V(H)|-2)!^2}{|\Aut(H)|^2}\sum_{(a,b),(c,d)\in E^{+}(H)}\lrpar{t\left(H\bigominus_{(a,b),(c,d)} H,W\right)-t\left(H\bigoplus_{(a,b),(c,d)} H,W\right)}, 
\end{align} 
where the second equality uses the identities from Lemma \ref{lemma:edge_join}
and \eqref{eq:fact_Aut},
and the third equality follows from the definitions in
\eqref{def:tilde_minus} and \eqref{def:tilde_plus}.  
This yields the result \eqref{qabvar}, since $\E f_\Qab=0$.
\end{proof}

Lemma \ref{lm:Qab_variance} and \eqref{beet2} show that
\begin{align}\label{gss=}
  \gss=\gss_{H,W},
\end{align}
as defined in \eqref{gss}.

Next, we compute the Hilbert--Schmidt operator $T$ as defined in
\cite[Theorem 2]{janson1991asymptotic}. 
Note first that in our case this operator is defined on the space
$M_{K_{\{1\}}}$. 
Recall that $M_\Ki\subset L^2(\Ki)$, where $L^2(\Ki)$ is the space of all
square integrable random variables of the form $g(U_1)$.
We may identify $L^2(\Ki)$ and $L^2\oi$, and then 
\eqref{eq:LGH} yields the orthogonal decomposition
\begin{align}\label{eq:decomp}
    L^{2}[0,1]=M_\Ki \bigoplus M_{\emptyset}, %\bigoplus M_{K_{\{1\}}}, 
\end{align}
where $M_{\emptyset}$ is the one-dimensional space of all constants. 
Hence, $M_\Ki$ is identified with the subspace of $L^2\oi$ orthogonal to
constants, i.e., $M_\Ki=\bigset{g\in L^2\oi:\int_0^1 g=0}$.

Then, taking $g,h\in M_{K_{\{1\}}}\subset L^2\oi$, the
definitions given in \cite[Theorem 2]{janson1991asymptotic} yield
\begin{align}\label{eq:Tgh}
\langle Tg,h\rangle &=
 \frac{1}{2(|V(H)|-2)!}\mathbb{E}\left[fg(U_{1})h(U_{2})\right] 
%\nonumber \\ &
%=\frac{1}{2(|V(H)|-2)!}\mathbb{E}\left[f_{\Kij}g(U_{1})h(U_{2})\right] 
. \end{align}  
 
Recall the operator $T_{W_H}$ defined on $L^2\oi$ by \eqref{eq:TW} and
\eqref{eq:WH}. 

\begin{lemma}\label{lemma:operator_TWH} 
If $W$ is $H$-regular, then 
the operator $T$ on $M_\Ki$ defined in \eqref{eq:Tgh} 
equals the operator $T_{W_H}$ restricted to the space
$M_{K_{\{1\}}}$. Moreover, then the multiset of
non-zero
eigenvalues of\/ $T$ is equal to $\mathrm{Spec}^{-}(W_{H})$.
\end{lemma}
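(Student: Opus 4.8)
The plan is to reduce the first assertion to the single kernel identity
\[
\frac{1}{2(|V(H)|-2)!}\sum_{H'\in\mathscr{G}_H}t_{(1, 2)}(x,y,H',W)=W_H(x,y)\qquad\text{for a.e. }(x,y)\in[0,1]^2 ,
\]
after which, comparing with \eqref{eq:T_op1}, \eqref{eq:TW} and \eqref{eq:WH} immediately gives $(Tg)(y)=\int_0^1 W_H(x,y)g(x)\,\mathrm{d}x=(T_{W_H}g)(y)$ for every $g\in M_{K_{\{1\}}}$, which is the first part of the lemma. To prove the identity I would first record the automorphism invariance $t_{(a, b)}((x,y),H,W)=t_{(\sigma(a),\sigma(b))}((x,y),H,W)$ for all $\sigma\in Aut(H)$, valid because relabelling the vertices of $H$ by $\sigma^{-1}$ does not change the conditional homomorphism density. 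Consequently, for each $H'\in\mathscr{G}_H$ and each isomorphism $\phi\colon H'\to H$ one has $t_{(1, 2)}(x,y,H',W)=t_{(\phi(1),\phi(2))}((x,y),H,W)$, with the right-hand side independent of the choice of $\phi$.

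Next I would double-count the set of pairs $(H',\phi)$ with $H'\in\mathscr{G}_H$ and $\phi\colon H'\to H$ an isomorphism, which is in bijection with the set of the $|V(H)|!$ bijections $\psi\colon V(H)\to\{1,\dots,|V(H)|\}$ via $H'=\psi(H)$ and $\phi=\psi^{-1}$. Summing $t_{(1, 2)}(x,y,H',W)$ over this set in one way gives $|Aut(H)|\sum_{H'\in\mathscr{G}_H}t_{(1, 2)}(x,y,H',W)$, since each $H'\in\mathscr{G}_H$ admits exactly $|Aut(H)|$ isomorphisms onto $H$; summing in the other way, using $t_{(1, 2)}(x,y,\psi(H),W)=t_{(\psi^{-1}(1),\psi^{-1}(2))}((x,y),H,W)$ together with the fact that each ordered pair $(a,b)$ of distinct vertices of $H$ arises as $(\psi^{-1}(1),\psi^{-1}(2))$ for exactly $(|V(H)|-2)!$ bijections $\psi$, gives $(|V(H)|-2)!\sum_{1\le a\neq b\le |V(H)|}t_{(a, b)}((x,y),H,W)$. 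Equating and dividing by $2|Aut(H)|$ yields exactly $W_H(x,y)$ as in \eqref{eq:WH}, proving the kernel identity.

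For the eigenvalue statement I would identify $M_{K_{\{1\}}}$ with the closed subspace $\{g\in L^2[0,1]:\int_0^1 g(x)\,\mathrm{d}x=0\}$, the orthogonal complement of the constant function $\bm 1$. Since $W$ is $H$-regular, \eqref{eq:degreeWH} and \eqref{eq:degreeH} show that $W_H$ is degree regular with constant degree $d_{W_H}>0$, the positivity being a consequence of the assumption that $W$ is not $H$-free (cf. Remark~\ref{remark:H_complete}); hence $T_{W_H}\bm 1=d_{W_H}\bm 1$. By self-adjointness of the compact operator $T_{W_H}$, the decomposition $L^2[0,1]=M_{K_{\{1\}}}\oplus\mathrm{span}(\bm 1)$ is into $T_{W_H}$-invariant subspaces, and $T_{W_H}$ acts on the one-dimensional summand as multiplication by $d_{W_H}\neq 0$. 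Therefore the multiset of nonzero eigenvalues of $T=T_{W_H}\big|_{M_{K_{\{1\}}}}$ is obtained from $\mathrm{Spec}(W_H)$ by deleting a single copy of $d_{W_H}$, which is precisely $\mathrm{Spec}^{-}(W_H)$ by definition.

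The step I expect to be the main obstacle is the double-counting in the second paragraph: one has to track carefully the interplay between $Aut(H)$, the labelled copies in $\mathscr{G}_H$, and the choice of the marked ordered pair, so that the combinatorial constants combine to convert the normalization $1/\big(2(|V(H)|-2)!\big)$ appearing in \eqref{eq:T_op1} into the normalization $1/\big(2|Aut(H)|\big)$ in the definition \eqref{eq:WH} of $W_H$. The remaining functional-analytic steps are routine applications of the spectral theorem for compact self-adjoint operators.
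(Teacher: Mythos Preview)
Your proposal is correct and follows essentially the same approach as the paper's proof. Both arguments reduce the first assertion to the kernel identity by double-counting over the $|V(H)|!$ permutations of $V(H)$ (you phrase this as pairs $(H',\phi)$, the paper sums directly over $\phi\in S_{|V(H)|}$), obtaining the same two expressions $|Aut(H)|\sum_{H'\in\mathscr{G}_H}t_{(1,2)}(x,y,H',W)$ and $(|V(H)|-2)!\sum_{a\neq b}t_{(a,b)}((x,y),H,W)$; and both deduce the spectral statement from the decomposition $L^2[0,1]=\mathrm{span}(\bm 1)\oplus M_{K_{\{1\}}}$ together with $T_{W_H}\bm 1=d_{W_H}\bm 1$.
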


\begin{proof} 

We may replace $f$ by $\E\sqpar{f\mid U_1,U_2}$ in \eqref{eq:Tgh}, which by
\eqref{efuu} yields
    \begin{align}\label{Tgh2}
\langle Tg,h\rangle&    
= \frac{1}{2(|V(H)|-2)!}\mathbb{E}\bigsqpar{\E\sqpar{f\mid U_1,U_2}g(U_{1})h(U_{2})}
\notag\\
    &=\frac{1}{2(|V(H)|-2)!}\mathbb{E}\left[\sum_{H'\in \mathscr{G}_H}\tij(U_{1},U_{2},H',W)g(U_{1})h(U_{2})\right]\nonumber\\
   % &=\frac{1}{2(r-2)!}\sum_{H'\in\mathscr{G}_H}\int \tij(x,y,H',W)g(x)h(y)\mathrm{d}xdy\nonumber\\
    &=\left\langle\frac{1}{2(|V(H)|-2)!}\int\sum_{H'\in \mathscr{G}_H}\tij(x,\cdot,H',W)g(x)\mathrm{d}x,h(\cdot)\right\rangle .
\end{align}

Denote by $S_{|V(H)|}$ the set of all $|V(H)|!$ permutations of $V(H)$. 
%We can consider each of the permutations to be isomorphisms of the graph $H$. 
Then it is easy to observe that 
\begin{align}\label{eq:all_iso1}
    \sum_{\phi\in S_{|V(H)|}}\tij(x,y,\phi(H),W)=|\Aut(H)|\sum_{H'\in\mathscr{G}_H}\tij(x,y,H',W)
.\end{align}
Also,
\begin{align}
    \sum_{\phi\in S_{|V(H)|}}\tij(x,y,\phi(H),W)
    &=\sum_{1\leq a\neq b\leq |V(H)|}\sum_{\substack{\phi\in S_{|V(H)|}\\\phi(a)=1,\phi(b)=2}}\tij(x,y,\phi(H),W)\nonumber\\
    &=\sum_{1\leq a\neq b\leq |V(H)|}\sum_{\substack{\phi\in S_{|V(H)|}\\\phi(a)=1,\phi(b)=2}}t_{\phi^{-1}(1)\phi^{-1}(2)}(x,y,H,W)\nonumber\\
    &=\sum_{1\leq a\neq b\leq |V(H)|}\sum_{\substack{\phi\in S_{|V(H)|}\\\phi(a)=1,\phi(b)=2}}t_{a, b}(x,y,H,W)\nonumber\\
    &=(|V(H)|-2)!\sum_{1\leq a\neq b\leq |V(H)|}t_{a,b}(x,y,H,W)\label{eq:all_iso2}
\end{align}
Combining \eqref{eq:all_iso1} and \eqref{eq:all_iso2}, we have,
recalling \eqref{eq:WH},
\begin{align}\label{eq:WH_alt}
    \frac{1}{2(|V(H)|-2)!}\sum_{H'\in\mathscr{G}_H}\tij(x,y,H',W)=\frac{1}{2|\Aut(H)|}\sum_{1\leq a\neq b\leq |V(H)|}t_{a, b}(x,y,H,W)=W_{H}(x,y).
\end{align}
Consequently, combining \eqref{Tgh2}, \eqref{eq:WH_alt} and \eqref{eq:TW},
we obtain
\begin{align}\label{TT}
  \innprod{Tg,h}=\innprod{T_{W_H} g,h},
\qquad
g,h\in M_\Ki.
\end{align}
Furthermore, since $W$ is $H$-regular, $W_H$ is degree regular and
\eqref{eq:degreeH} shows that 
\begin{align}
  \label{TWH1}
T_{W_H}1=d_{W_H}=d_{W_H}\cdot 1.
\end{align}
Hence, $T_{W_H}$ maps the space $M_\emptyset$ of constant functions into
itself. 
By \eqref{eq:decomp}, $M_\Ki$ is the  orthogonal complement of $M_\emptyset$,
and thus, since $T_{W_H}$ is a symmetric operator, 
$T_{W_H}$ also maps $M_\Ki$ into itself.
Hence both $T$ and $T_{W_H}$ map $M_\Ki$ into itself, and thus
\eqref{TT} shows that $T=T_{W_H}$ on $M_\Ki$.

Finally, recall that $\gL$ in \eqref{beet1} is the multiset of non-zero
eigenvalues of $T$, which we just have shown equals the multiset of
eigenvalues of $T_{W_H}$ on $M_\Ki$. Moreover, on $M_\emptyset$, 
$T_{W_H}$ has the single eigenvalue $d_{W_H}$ by \eqref{TWH1}. 
Hence, $\Spec(W_H)=\gL\cup\set{d_H}$,
and thus $\Spec^-(W_H)=\gL$ by the definition after \eqref{eq:degreeH}.
\end{proof}

\begin{proof}[Proof of Theorem~\ref{thm:asymp_count}(2)]
The result now follows by \eqref{beet1}, \eqref{gss=}, and
Lemma \ref{lemma:operator_TWH}.
\end{proof}

\subsection{Higher Order Limits}
\label{sec:degeneratelimit}

In the case where the limit in Theorem~\ref{thm:asymp_count}(2) is degenerate (as in Example \ref{example:3star}), the function $f$ in \eqref{eq:def_f} has principal degree $d > 2$. In this case, \cite[Theorem 3]{janson1991asymptotic} shows that
$(X_n(H, W) - \E X_n(H, W))/n^{|V(H)| - d/2}$  has a (non-degenerate) limit distribution, which can be expressed as a polynomial of degree $d$ in
(possibly infinitely many) independent standard Gaussian variables.
The expression in  \cite[Theorem 3]{janson1991asymptotic} uses Wick products
of Gaussian variables; these can be expressed using Hermite polynomials, see
\cite[Theorems 3.19 and 3.21]{SJIII}. One simple illustration (with $d=4$)
is given in Example \ref{example:3star}. This leads to the following natural  open questions: 

\begin{problem}
  For which graphs $H$ can such higher order limits (i.e., with $d\ge3$) occur?
\end{problem}

\begin{problem}
  Is it possible to have arbitrarily high order principal degree $d$?
\end{problem}

\section{Proof of Theorem \ref{thm:bizero}}
\label{sec:bipartitepf}

It is obvious from \eqref{eq:sigma-sq-exp} that if $W$ is random free, then $\sigma_{H,W}^2=0$. For the converse, suppose that $W$ is not random free. 
Then the set $P:=\bigset{(x,y)\in[0,1]^2:0<W(x,y)<1}$
has  %$P\subseteq [0,1]^2$ with 
$|P|>0$, 
where $|\cdot|$ denotes the Lebesgue measure.
Let $(x_{0},y_{0})$ be a Lebesgue
point of $P$. Then we can find intervals $I$ and $J$ containing $x_{0}$ and
$y_{0}$ respectively such that $|P\bigcap(I\times J)|>(1-\varepsilon)
|I\times J|$ ($\varepsilon>0$ to be chosen later). Define, 
\begin{align}\label{defI'}
    P_{x}:=\left\{y\in J:(x,y)\in P\right\} \quad \text{ and } \quad I':=\left\{x\in I:|P_{x}|>(1-\delta)|J|\right\}, 
\end{align}
where $\delta>0$ will be chosen later. Then,
\begin{align}
    \delta|J|\left|I\setminus I'\right| \leq \int_{I\setminus I'}\left|J\setminus P_{x}\right|\ddx  \leq \int_{I}\left|J\setminus P_{x}\right| \ddx 
    &=\int_{I}|J| \ddx-\int_{I}|P_{x}| \ddx 
\notag\\& 
=|I||J|-\int_{I}\int_{P_x} \dd z  \ddx
\notag\\& 
    =|I||J|- |P \bigcap\left(I\times J\right) |
\notag\\& 
    <\varepsilon\left|I\times J\right|=\varepsilon|I||J| . 
\end{align}
This implies, 
\begin{align}\label{eq:I-I'}
    \left|I\setminus I'\right|\leq \frac{\varepsilon}{\delta}|I| . 
\end{align}
Similarly, defining $P^{y}:=\left\{x\in I:(x,y)\in P\right\}$ and $J':=\left\{y\in J:\left|P^{y}\right|>(1-\delta)|I|\right\}$ we have,
\begin{align}\label{eq:J-J'}
    \left|J\setminus J'\right|\leq \frac{\varepsilon}{\delta}|J| . 
\end{align}

Next, fix $a<b\in V(H)$ such that $(a,b)\in E(H)$. Suppose $H$ has bipartition $(A, B)$ and without loss of generality consider $a\in A$ and $b\in B$. Then from \eqref{eq:sigma-sq-exp} it follows that, 
\begin{align}\label{eq:lower-bdd-sigma}
    \sigma_{H,W}^2 & \geq c_H %\frac{1}{2\left|\Aut(H)\right|^2}
\int_{[0, 1]^2} t_{a, b}^{-}(x, y, H, W)^2 W(x,y)(1-W(x,y)) \ddx \ddy . 
\end{align} 
Define,
\begin{align}
    \mathcal{S}:=\bigg\{\bm{z}_{-(a,b)}:=(z_{1},  \cdots,z_{a-1},z_{a+1}, & \cdots,z_{b-1},z_{b+1},  \cdots,z_{|V(H)|}) \nonumber \\ 
    & : z_{v}\in I \text{ if } v \in A \backslash\{ a \} 
\text{ and } z_{v}\in J\text{ if } v\in B\backslash\{b\}   \bigg\} . 
\end{align}
and  
\begin{align}\label{eq:tabzxy}
t_{a, b}^{-}(\bm z_{-(a, b)}, x, y, H, W)  
=   \prod_{r \in N_H(a)\backslash \{b\}}W(x, z_r) 
\prod_{s \in N_H(b) \backslash \{a\}} W(y, z_s) 
\prod_{(r, s) \in E(H\backslash \{a, b\}) } W(z_r, z_s) .   
\end{align}
Note that 
\begin{align}\label{eq:tabxy}
\int_{[0, 1]^{|V(H)|-2}}t_{a, b}^{-}(\bm z_{-(a, b)}, x, y, H, W) \prod_{r \notin \{a, b\}} \dd z_r = t_{a, b}^{-}(x, y, H, W) .
\end{align} 
It is easy to see that $|\mathcal{S}|=|I|^{|A|-1}|J|^{|B|-1}$. Now, fix $(x,y)\in I'\times J'$. Then 
\begin{align}\label{eq:zero-bound}
\mathcal Q_0 : =    \left|\left\{\bm{z}_{-(a,b)}\in \mathcal{S}: t_{a, b}^{-}(\bm z_{-(a, b)}, x, y, H, W)  =0\right\}\right| & \leq T_1 + T_2 + T_3 , 
\end{align}
where 
\begin{align}
T_1 & := \sum_{r \in N_H(a)\backslash \{b\}} \left|\left\{\bm{z}_{-(a,b)}\in\mathcal{S}: W(x, z_r) =0\right\}\right|,  \\ 
T_2 & := \sum_{s \in N_H(b)\backslash \{a\}} \left|\left\{\bm{z}_{-(a,b)}\in\mathcal{S}: W(y, z_s) =0\right\}\right|, \\ 
T_3 & := \sum_{ (r, s) \in E(H\backslash \{a, b\})  } \left|\left\{\bm{z}_{-(a,b)}\in\mathcal{S}: W(z_r, z_s) =0\right\}\right| . 
\end{align} 
Let us now look at each term separately. We begin with $T_1$. Note that for $r \in N_H(a)\backslash \{b\}$, 
\begin{align}
    \left|\left\{\bm{z}_{-(a,b)}\in\mathcal{S}:W(x,z_{r})=0\right\}\right|
    &= \left|\left\{z_{r}\in J:W(x, z_{r})=0 \right\}\right||I|^{|A|-1}|J|^{|B|-2}
\notag\\&     
\leq \left|J\setminus P_{x}\right||I|^{|A|-1}|J|^{|B|-2}
\notag\\& 
     < \delta|I|^{|A|-1}|J|^{|B|-1}
\end{align}
where the last inequality follows from 
our assumption $x\in I'$ and \eqref{defI'}. 
% the definition of $I'$. 
This implies, 
\begin{align}\label{eq:T1}
T_1 < (d_a -1 ) \delta|I|^{|A|-1}|J|^{|B|-1} ,
\end{align}
where $d_a$ is the degree of the vertex $a$ in $H$. Similarly, 
\begin{align}\label{eq:T2}
T_2 < (d_b -1 ) \delta|I|^{|A|-1}|J|^{|B|-1} .
\end{align}
Finally, consider $T_3$. Suppose $(r, s) \in E(H\backslash \{a, b\})$  and assume without loss of generality $r \in A$ and $s\in B$.  Then,
\begin{align}
    \left|\left\{\bm{z}_{-(a,b)}\in\mathcal{S}:W(z_{r},z_{s})=0\right\}\right|
    &= \left|\left\{z_{r}\in I, z_{s}\in J :
      W(z_{r},z_{s})=0\right\}\right||I|^{|A|-2}|J|^{|B|-2}
\notag\\& 
    \leq \left|\left(I\times J\right)\setminus\left( P \bigcap\left(I\times
  J\right)\right)\right||I|^{|A|-2}|J|^{|B|-2}
\notag\\& 
    <\varepsilon|I\times J||I|^{|A|-2}|J|^{|B|-2}
\notag\\&     
   =\varepsilon|I|^{|A|-1}|J|^{|B|-1} .
\end{align} 
This implies, 
\begin{align}\label{eq:T3}
T_3 \leq (|E(H)|-d_{a}-d_{b}+1) \varepsilon|I|^{|A|-1}|J|^{|B|-1} . 
\end{align}
Combining \eqref{eq:T1}, \eqref{eq:T2}, and \eqref{eq:T3} with \eqref{eq:zero-bound} gives, 
\begin{align}
   \mathcal Q_0    &\leq \left[\left(d_{a}+d_{b}-2\right)\delta + \left(E(H)-d_{a}-d_{b}+1\right)\varepsilon\right]|I|^{|A|-1}|J|^{|B|-1}\notag\\
    &< 2 \left|E(H)\right|(\delta+\varepsilon)|I|^{|A|-1}|J|^{|B|-1} . 
\end{align}
Choosing $\delta=10 \varepsilon$ and $\varepsilon<\frac{1}{100 |E(H)|}$ gives $
\left|E(H)\right|(\delta+\varepsilon)|I|^{|A|-1}|J|^{|B|-1}<|I|^{|A|-1}|J|^{|B|-1}.$ 
Thus, 
\begin{align}
 \mathcal Q_0 < |I|^{|A|-1}|J|^{|B|-1} , 
\end{align}
and hence, $|\mathcal S \backslash \mathcal Q_0| > 0$. This implies, recalling \eqref{eq:tabxy}, 
\begin{align}\label{eq:tabxyHW} 
t_{a, b}^{-}(x, y, H, W) & = \int_{[0, 1]^{|V(H)|-2}}t_{a, b}^{-}(\bm z_{-(a, b)}, x, y, H, W) \prod_{r \notin \{a, b\}} \dd z_r \nonumber \\ 
& \geq  \int_{\mathcal S \backslash \mathcal Q_0} t_{a, b}^{-}(\bm z_{-(a, b)}, x, y, H, W) \prod_{r \notin \{a, b\}} \dd z_r > 0 , 
\end{align} 
since  $t_{a, b}^{-}(\bm z_{-(a, b)}, x, y, H, W) > 0$ on $\mathcal S \backslash \mathcal Q_0$. Recall that $(x,y)\in I'\times J'$ was chosen arbitrarily; hence \eqref{eq:tabxyHW} is true for all $(x,y)\in I'\times J'$. Further observe that
\begin{align}
    I'\times J'\subseteq \left\{P\bigcap\left(I'\times J'\right)\right\}\bigcup\left\{\left(I\times J\right)\setminus\left(P\bigcap\left(I\times J\right)\right)\right\},
\end{align}
implying
\begin{align*}
    \left|P\bigcap\left(I'\times J'\right)\right|
    & \geq |I'||J'|-\left|\left(I\times J\right)\setminus\left(P\bigcap\left(I\times J\right)\right)\right| \\
    & \geq |I'||J'|-\varepsilon|I||J| \tag*{(by  \eqref{eq:I-I'} and \eqref{eq:J-J'})} \\
    &\geq \left(\left(1-\frac{\varepsilon}{\delta}\right)^2-\varepsilon\right)|I||J| \\ 
    & =\left(0.81-\varepsilon\right)|I||J|>0 .
\end{align*} 
Therefore, recalling \eqref{eq:lower-bdd-sigma}
\begin{align}
    \sigma_{H,W}^2 & \geq c_H\int_{P\bigcap\left(I'\times J'\right)} t_{a, b}^{-}(x, y, H, W)^2 W(x,y)(1-W(x,y)) \ddx \ddy \nonumber \\ 
    & > 0 ,
\end{align} 
since by \eqref{eq:tabxyHW} and the definition of the set $P$, $t_{a,
  b}^{-}(x, y, H, W)^2 W(x,y)(1-W(x,y)) > 0$ for all $(x,y)\in P
\bigcap\left(I'\times J'\right)$. This shows that if $\sigma_{H, W}^2 = 0$ then $W$ is random-free. \qed 

\medskip

We conclude this section with an example (which generalizes the construction in \cite[Figure 1]{hladky2019limit} for triangles to general cliques) illustrating that Theorem \ref{thm:bizero} does not hold if the bipartite assumption is dropped (as mentioned in Remark \ref{remark:biHW}). 

\begin{example} \label{ExH} Suppose $H=K_r$ is the $r$-clique, for $r \geq 3$. Partition $[0, 1]$ into $2r$ intervals of measure $\frac{1}{2 r}$ each. 
Denote the first $r$ sets by $I_1, I_2, \ldots,  I_{r}$ and the next   $r$ sets by $J_1, J_2, \ldots,  J_{r}$.  Consider the following graphon: 
 \begin{align}\label{WHex}
  W(x, y) = 
\left\{
\begin{array}{cl}
 1 & \text{ for }  (x, y) \in (I_a \times I_b)  \text{ such that }  1 \leq a \ne b \leq r , \\ 
 1 & \text{ for }  (x, y) \in (J_a \times J_b)  \text{ such that } 1 \leq a \ne b \leq r , \\ 
 \frac{1}{2} & \text{ for }  (x,y)\in (I_1 \times J_1) \cup (J_1 \times I_1) , \\ 
  0 & \text{ otherwise. } 
\end{array}
\right.
\end{align}
In other words, $W$ is obtained by taking 2 disjoint graphon representations of $K_r$ (which corresponds to the complete $r$-partite graphon) inside $[0, \frac{1}{2}]^2$ and $[\frac{1}{2}, 1]^2$, respectively, and connecting the edges between the sets  $I_1$ and $J_1$ with probability $\frac{1}{2}$. Note that $t(K_r, W) > 0$. Denote $R := (I_1 \times J_1) \cup (J_1 \times I_1)$. By \eqref{eq:sigma-sq-exp}, 
\begin{align}
 \sigma_{H,W}^2= \frac{c_{K_r}}{4}&\sum_{\substack{ 1 \leq a \ne b \leq r \\ 1 \leq a \ne b \leq r }} \int_{R} t_{a, b}^{-}(x, y, K_r, W) t_{c, d}^{-}(x, y, K_r, W) \ddx \ddy . %\nonumber\\
%& +  \int_{\frac{1}{3}}^{\frac{2}{3}} \int_{\frac{2}{3}}^{1} t_{a, b}^{-}(x, y, H, W) t_{c, d}^{-}(x, y, H, W) \ddx \ddy
\label{eq:Hexample}
\end{align}      
Next, fix $1 \leq a \ne b \leq r$. If $(x,y)\in R$, then,
using the notation \eqref{eq:tabzxy}, 
\begin{align}\label{ab1}
%W(x,y)
t_{a, b}^{-}(\bm z_{-(a, b)}, x, y, K_r, W) = 0,   
\end{align} 
for all $\bm z_{-(a, b)}\in[0,1]^{r-2}$. Hence, for every $(x,y)\in R$, we have
%$t_{a, b}^{-}(\bm z_{-(a, b)}, x, y, K_r, W) =0$ for all $z_{-(a,b)}$,
%and thus 
$t_{a, b}^{-}(x, y, H, W) =0$ by \eqref{eq:tabxy}.
Consequently, it follows from \eqref{eq:Hexample} that
$ \sigma_{H,W}^2 = 0$. (In fact, $i_1,\dots,i_r$ can form an $r$-clique in $G(n,W)$ only if
$U_{i_1},\dots,U_{i_r}$ all belong to either $\bigcup_a I_a$ or $\bigcup_a J_a$;
hence the value of $W$ on $I_1\times J_1$ does not matter for $X_n(K_r,W)$.)
Moreover, \eqref{ab1} also implies that $\overline{t}(x, K_r, W)$ is
constant a.e., that is, $W$ is $K_r$-regular.
\end{example}

\section{Proof of Theorem \ref{thm:condition}}
\label{sec:conditionpf}

In the proof we will consider many equations or other relations 
that hold a.e.\ in $\oi$ or $\oi^2$. For this we use the notation that, for example, 
$\Sref{eq:condition1}$ denotes the set of all $(x,y)\in\oi^2$ such that the
equation in \eqref{eq:condition1} holds, and  
$\bSref{eq:condition1}$ denotes 
$\set{x\in\oi:(x,y)\in\Sref{eq:condition1}\text{ for a.e.}\ y\in\oi}$.
We use this notation  only for sets $\Srefdummy$ with full measure in
$\oi^2$;
note that then, by a standard application of Fubini's theorem, 
$\bSrefdummy$ has full measure in $\oi$, that is,
$x\in\bSrefdummy$ for a.e.\ $x\in\oi$.
Similarly, for relations with a single variable, we let, for example,
$\bSref{cl1}$ be the set of $x\in\oi$ such that the inequality
in \eqref{cl1} holds.

We tacitly assume $x,y,z\in\oi$ throughout the proof.
However, for notational convenience, we may write integrals with limits
that might be outside $\oi$; $\int_a^b$ should always be interpreted as
$\int_{[a,b]\cap\oi}$.  

For all $x\in[0,1]$, define $W_{x}:[0,1]\rightarrow[0,1]$ as
\begin{align}\label{eq:defWx}
    W_{x}(y):=W(x,y).
\end{align}
We regard $W_x$ as an element of $L^{2}[0,1]$.
Note that this means, in particular, that $W_x=W_y$ means
$W(x,z)=W(y,z)$ for a.e.\ $z$.
Since $W(x,y)$ is measurable and bounded,
it is well known that the mapping $x\mapsto W_x$ is a measurable, and
(Bochner) integrable, map $\oi\to L^2\oi$, see
\cite[Lemma III.11.16(b)]{dunford1988linear}.
The Lebesgue differentiation theorem holds for Bochner
integrable Banach space value functions,
see \cite[\S5.V]{bochner1933integration};
hence, a.e.\ $x\in\oi$ is a Lebesgue point of $x\mapsto W_x$. 
We will use $\norm{\cdot}$ and $\innprod{\cdot,\cdot}$ for the norm and inner
product in $L^2\oi$.

We will denote $t:= t(C_4, W)$. Suppose (to obtain a contradiction) that $t>0$, $W\not\equiv1$, 
but that the limit in \eqref{eq:limit2} is degenerate,
that is, $\Spec^-(W_{C_4})=\emptyset$ and $\gss_{C_4.W}=0$.
Then \eqref{eq:condition1} and \eqref{eq:condition2} both hold by Lemma
\ref{LC4}, and $W$ is random-free by Theorem
\ref{thm:bizero}, that is,
\begin{align}\label{rf}
  W(x,y)\in\set{0,1},
\qquad \text{a.e.\ } x,y.
\end{align}
We now separate the proof of the theorem into a sequence of claims.

\begin{claim}\label{CL1}
  For a.e.\ $x\in\oi$ and $W_x$ as defined in \eqref{eq:defWx},
  \begin{align}\label{cl1}
    \norm{W_x}\le (3t)^{1/4}
.  \end{align}
\end{claim}

\begin{proof}
By \eqref{eq:Uxy} and \eqref{eq:condition1},
for a.e.\ $(x,y)$,
\begin{align}\label{cl1a}
  \innprod{W_x,W_y} = U_1(x,y)
\le (3t)^{1/2}.
\end{align}
In particular, if $x\in\bSref{cl1a}$, then for every $\gd>0$,
\begin{align}
\left\langle W_x,  \frac{1}{2\gd}\int_{x-\gd}^{x+\gd} W_y \dd y\right\rangle
=
  \frac{1}{2\gd}\int_{x-\gd}^{x+\gd}  \innprod{W_x,W_y} \dd y\le (3t)^{1/2}.
\end{align}
If, furthermore, $x$ is a Lebesgue point of $x\mapsto W_x$, then it follows by
letting $\gd\to0$ that $\norm{W_x}^2\le(3t)^{1/2}$.
%\eqref{cl1} holds.
\end{proof}

\begin{claim}\label{C:CSW}
For a.e.\ $(x,y)\in\oi^2$,
  \begin{align}\label{csw}
W(x,y)=0 \implies W_x=W_y \text{ in } L^2\oi
\text{ and\/ }
\norm{W_x}=\norm{W_y}=(3t)^{1/4}. 
  \end{align}
\end{claim}

\begin{proof}
By \eqref{eq:Uxy} and \eqref{eq:condition1},
if $(x,y)\in\Sref{eq:condition1}$ and $W(x,y)=0$, then
\begin{align}\label{eq:U1prod}
\innprod{W_x,W_y}= U_{1}(x,y)=(3t)^{1/2}.
\end{align} 
If, furthermore, $x,y\in\bSref{cl1}$, then the Cauchy--Schwarz inequality
yields
\begin{align}
  (3t)^{1/2}=\innprod{W_x,W_y}
\le \norm{W_x}\norm{W_y}\le (3t)^{1/2}.
\end{align}
Hence, we must have equalities, and thus 
$\norm{W_x}=\norm{W_y}=(3t)^{1/4}$; 
moreover, equality in the Cauchy--Schwarz
inequality implies $W_x=W_y$.
\end{proof}

\begin{claim}\label{CL<1}
  We have $(3t)^{1/2}<1$.
\end{claim}

\begin{proof}
  Let $Z:=\set{(x,y):W(x,y)=0}$ and $Z':=Z\cap\Sref{csw}$. 
By \eqref{rf} and the assumption that $W$
  is not a.e.\ 1, we have $|Z'|=|Z|>0$.
For $x\in\oi$, let $Z'_x:=\set{y:(x,y)\in Z'}$.
By  Fubini's theorem, $\int_0^1|Z'_x|\dd x=|Z'|>0$, and thus
there exists  $x$ such that $|Z'_x|>0$.
Fix one such $x$. % with $|Z'_x|>0$. 
Then there exists $y\in Z'_x$, and thus $(x,y)\in Z'=Z\cap\Sref{csw}$.
Consequently, \eqref{csw} applies and yields $\norm{W_x}=(3t)^{1/4}$.
Furthermore,
$W(x,y)=0$ for all $y\in Z'_x$, and thus
\begin{align}
(3t)^{1/2}= \norm{W_x}^2=\int_0^1 W(x,y)^2\dd y \le 1-|Z'_x|<1.
\end{align}
\end{proof}

\begin{claim}\label{CL=}
  For a.e.\ $x\in\oi$,
  \begin{align}\label{cl=}
    \norm{W_x}= (3t)^{1/4}<1
.  \end{align}
\end{claim}

\begin{proof}
  Suppose $x\in  \bSref{rf} \cap\bSref{cl1}$.
Then, using Claim \ref{CL<1},
\begin{align}
|\set{y:W(x,y)>0}|
=
|\set{y:W(x,y)=1}|
=
\int_0^1 W(x,y)^2\dd y = \norm{W_x}^2\le (3t)^{1/2}<1.  
\end{align}
If, furthermore, $x\in\bSref{csw}$, this implies that 
there exists $y$ such that $W(x,y)=0$ and $(x,y)\in\Sref{csw}$, and thus, in
particular, 
$ \norm{W_x}= (3t)^{1/4}$.
The result \eqref{cl=} follows by Claim \ref{CL<1}.
\end{proof}

\begin{claim}\label{CLU2}
  For a.e.\ $(x,y)$,
  \begin{align}\label{clu2}
    W(x,y)>0 \implies U_2(x,y)>0.
  \end{align}
\end{claim}  

\begin{proof}
Let
\begin{align}
  L_1:=\set{(x,y)\in\oi^2:y \text{ is a Lebegue point of }y\mapsto W(x,y)}.
\end{align}
Then $L_1$ is measurable, and since for any given $x$, we have $(x,y)\in
L_1$ for a.e.\ $y$, it follows by Fubini's theorem that $|L_1|=1$, that is,
a.e. $(x,y)\in L_1$.

Now, assume that $(x,y)\in L_1$, $(y,x)\in L_1$ and that $(x,y)$ is a
Lebesgue point of the set $\set{(s,t):W(s,t)>0}$.
(In particular, $W(x,y)>0$.)
Let $\gd>0$ and let $I:=(x-\gd,x+\gd)$ and $J:=(y-\gd,y+\gd)$.
Then, if $\gd$ is small enough, 
\begin{align}
  |\set{s\in J:W(x,s)=0}|& < 0.1|J|,
\\
  |\set{t\in I:W(t, y)=0}|& < 0.1|I|,
\\
|\set{(s,t)\in J\times I:W(s,t)=0}|& < 0.1|I|\times|J|,
\end{align}
Then $W(x,s)W(s,t)W(t,y)>0$ on a subset of $I\times J$ of positive measure,
and thus $U_2(x,y)>0$.
  \end{proof}

\begin{claim}\label{CLW}
  For a.e.\ $(x,y)$,
\begin{align}\label{eq:Wxy}
	W(x,y)=1-\one\set{W_{x}=W_{y}}. 
\end{align} 
\end{claim}  

\begin{proof}
  Suppose $(x,y)\in\Sref{clu2}\cap\Sref{eq:condition1}$, and that
  $W(x,y)=1$.
Then $U_2(x,y)>0$ by \eqref{clu2}, and thus \eqref{eq:condition1} yields 
\begin{align}
  \innprod{W_x,W_y}=U_1(x,y) < (3t)^{1/2}.
\end{align}
If, furthermore, $x\in\bSref{cl=}$, it follows that $W_x\neq W_y$.

On the other hand,
if $(x,y)\in\Sref{csw}$ and $W(x,y)=0$, then $W_x=W_y$ by \eqref{csw}.

In both cases, \eqref{eq:Wxy} holds, and thus, using \eqref{rf},
\eqref{eq:Wxy} holds a.e.
\end{proof}

Since $W_{x}=W_{y}$  is an equivalence relation, 
 there exists a partition (possibly infinite) of
$\oi=\bigsqcup_{\alpha}B_{\alpha}$ such that 
if we define $\ga(x)$ for $x\in\oi$ by $x\in B_{\ga(x)}$, then
$W_x=W_y\iff\ga(x)=\ga(y)$, for all $x,y\in\oi$.
Note that each $B_{\ga}$ is measurable, since $x\mapsto W_x$ is.
We can write \eqref{eq:Wxy} as
%for a.e.\ $(x,y)$,
\begin{align}\label{eq:Wf}
	W(x,y)=\one\set{\ga(x)\neq \ga(y)},
\qquad\text{for a.e. } (x,y)
.\end{align}
%\begin{align}\label{eq:Wfunction}
%	W(x,y)=\one\left\{\alpha\neq \beta\right\}, 
%\quad \text{ for }x\in B_{\alpha},\,y\in B_{\beta}. 
%\end{align}

\begin{claim}\label{CLB}
  For a.e.\ $x\in\oi$,
  \begin{align}
    |B_{\ga(x)}|=1-(3t)^{1/2}.
  \end{align}
\end{claim}

\begin{proof}
  Suppose that $x\in\bSref{eq:Wxy} \cap \bSref{rf}\cap\bSref{cl=}$.
Then, 
\begin{align}
  |B_{\ga(x)}|  
&= \int_0^1\one\set{y\in B_{\ga(x)}}\dd y
= \int_0^1\one\set{W_y=W_x}\dd y
 = \int_0^1\bigpar{1-W(x,y)}\dd y\notag\\
&= 1-\int_0^1 W(x,y)\dd y 
%\\ & 
= 1-\int_0^1 W(x,y)^2\dd y
= 1-(3t)^{1/2}.
\end{align}
\end{proof}

Since $1-(3t)^{1/2}>0$ by Claim \ref{CL=}, there can only be a finite number of
parts $B_\ga$ of measure $1-(3t)^{1/2}$, and by Claim \ref{CLB}, they fill up
$\oi$ except for a null set.
Hence, Claim \ref{CLB} and \eqref{eq:Wf}
imply that $W$ is a.e.\ equal to a 
complete multipartite graphon with equal part sizes (and thus  finitely many
parts).
In other words, after a measure preserving transformation, 
$W$ equals a.e.\ 
the graphon $W_K$ defined as follows, see Figure \ref{fig:figureW}.
Given  an integer $K \geq 1$, 
partition the interval $[0, 1]$ into $K$ intervals $I_1, I_2, \ldots, I_K$
of equal length ${1}/{K}$, and
define 
% (see also Figure \ref{fig:figureW}):  
\begin{align}\label{eq:WKdef}
W_{K}(x, y) := 
  \begin{cases}
0  &   \text{ if }  (x, y) \in \bigcup_{s=1}^K I_s \times I_s , \\
1  &   \text {otherwise.}      
  \end{cases}
\end{align}

\begin{figure}[!h]
    \centering
    \includegraphics[width = 0.45\textwidth]{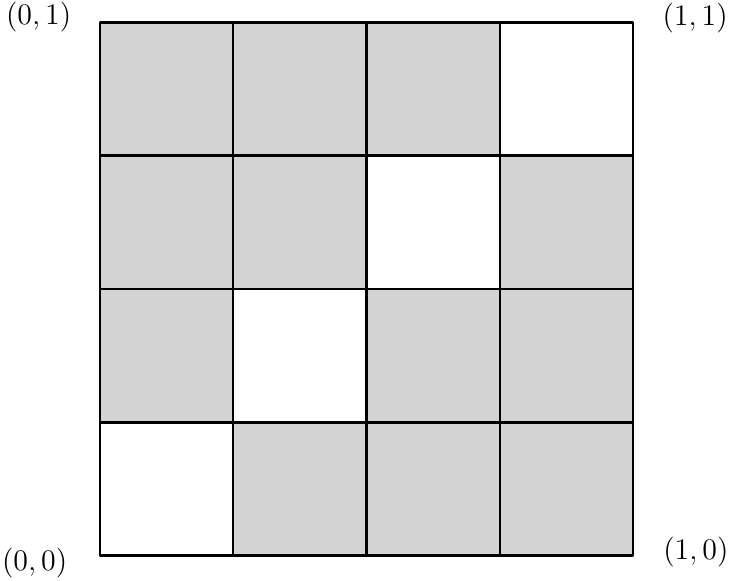}
    \caption{The graphon $W_{K}$ with $K=4$.}
    \label{fig:figureW}
\end{figure}

\begin{claim}\label{CLK}
  Let $W$ be the complete multipartite graphon $W_K$ with $K\ge2$ parts of equal
  sizes $1/K$. 
Then \eqref{eq:condition1} cannot hold.
\end{claim}

\begin{proof}
Suppose $W_K$ satisfies \eqref{eq:condition1} a.e. 
Then by Claim \ref{CLB}, each part must have size $1-(3t)^{1/2}$, that is,
$1-(3t)^{1/2} = 1/K$, which yields 
\begin{align}
	t(C_{4},W_K)=\frac{(K-1)^2}{3K^2} . 
	\end{align} 
On the other hand, a direct calculation shows that 	
	\begin{align}%\label{eq:tW2}
		t(C_{4},W_K)=\frac{(K-1)^4+(K-1)}{K^4}. 
	\end{align}
We thus must have $\frac{(K-1)^2}{3K^2} = \frac{(K-1)^4+(K-1)}{K^4}$,
which simplifies to
\begin{align}\label{quad}
  K(K-1)(2K^2-8K+9)=0,
\end{align}
which is impossible. (The only real roots to \eqref{quad} are $K=0$ and $K=1$.)
\end{proof}

Claim \ref{CLK} gives the desired contradiction and 
completes the proof of Theorem \ref{thm:condition}.

\section{Proof of Theorem \ref{thm:K12Z}}
\label{sec:K12Zpf}

The proof is similar to that of Theorem \ref{thm:condition}. Here we will
denote $t:= t(K_{1, 2}, W)=\int d_W(x)^2\dd x$. 
Suppose that $t>0$, $W\not\equiv1$, but that %the limit in \eqref{eq:limit2} is degenerate, that is, 
$\Spec^-(W_{K_{1,2}})=\emptyset$ and $\gss_{K_{1,2}.W}=0$.
Then \eqref{eq:K12-cond1} and \eqref{eq:K12-cond2} both hold by Lemma
\ref{lem:LK12}, and $W$ is random-free by Theorem
\ref{thm:bizero}, that is,
$W(x,y)\in \{0,1\}$ for  a.e.\ $x,y\in [0,1]^2$. 
Now, recalling the definition of $W_{x}$
from \eqref{eq:defWx} we have the following claim, which can be proved by
arguments similar to Claims \ref{CL1}, \ref{C:CSW}, \ref{CL<1}, and
\ref{CL=}.

\begin{claim}\label{claim:K121}
For a.e.\ $(x,y)\in [0,1]^2$,
\begin{align}
	W(x,y)= 0 \implies W_{x}=W_{y}\text{ in }L^{2}[0,1]\text{ and }\|W_{x}\|_{2}=\|W\|_{y}=(3t)^{1/2}.
\end{align}
Moreover,  for a.e.\ $x\in [0,1]$, $\|W_{x}\|_{2}=(3t)^{1/2}<1$. 
\end{claim}

%Proofs of Claim \ref{claim:K121} and Claim \ref{claim:K122} are similar to proof of Claims \ref{CL1}, \ref{C:CSW}, \ref{CL<1} and \ref{CL=} and hence are omitted. 

Next, we have the analogue of Claim \ref{CLU2} for the 2-star. 

\begin{claim}
For a.e.\ $(x,y)\in [0,1]^2$,
\begin{align}
	W(x,y)>0\implies d_{W}(x)+d_{W}(y)>0 . 
\end{align}
\end{claim}
\begin{proof}
	Similarly to the proof of Claim \ref{CLU2}, for a.e.\ $(x,y)\in[0,1]^2$
such that $W(x,y)>0$, we can choose $\delta>0$ small enough
    such that for %$I:=(x-\delta,x+\delta)$ and 
$J=(y-\delta,y+\delta)$, 
	\begin{align}
\left|\left\{s\in J:W(x,s)=0\right\}\right|<0.1|J| 
%\quad \text{ and } \quad \left|\left\{t\in I:W(t,y)= 0\right\}\right|<0.1|I| 
.	\end{align}
This implies that the set 
$\set{s\in \oi:W(x,s)>0}$
has positive measure, 
and thus
$d_W(x)>0$.
%$W(x, s)$ and $W(t, y)$ are positive. 
%Thus we can conclude that $d_{W}(x) + d_{W}(y)>0$.
\end{proof}

Now, as in Claim \ref{CLW}, it follows that for a.e.\ $(x,y)\in [0,1]^2$, 
\begin{align}\label{vaf}
W(x,y) = 1-\bm{1}\left\{W_{x}=W_{y}\right\}.  
\end{align}
As in the proof of Theorem \ref{thm:condition},
the equivalence relation $W_{x}=W_{y}$ defines  a possibly infinite
partition of $[0,1]=\bigsqcup_{\alpha}B_{\alpha}$. 
For $x\in [0,1]$ define $\alpha(x)$ to be the index such that $x\in
B_{\alpha(x)}$. 
Then, by definition,  $W_{x}=W_{y}\iff \alpha(x)=\alpha(y)$, 
which by \eqref{vaf} yields,
for a.e.\ $x\in [0,1]$,
\begin{align}\label{eq:Wform}
	W(x,y) = \bm{1}\left\{\alpha(x)\neq\alpha(y)\right\} . 
\end{align}
Again, similarly to Claim \ref{CLB} we have for a.e.\ $x\in [0,1]$,
\begin{align}\label{eq:bsize}
	\left|B_{\alpha(x)}\right| = 1-3t . 
\end{align}
Note that by Claim \ref{claim:K121}, $1-3t>0$. Hence, 
by \eqref{eq:bsize},
there can only be a
finite number of parts $B_{\alpha}$ of positive measure and 
the remaining parts have together measure $0$. 
Therefore, by \eqref{eq:Wform} and \eqref{eq:bsize}  we conclude that after
a measure preserving transformation, $W$ must be of the form $W_{K}$ as
defined in \eqref{eq:WKdef} for some $K\geq 1$. 
We have excluded $W\equiv1$, so $K>1$. 

\begin{claim}\label{claim:Kimpossible}
Let $W=W_{K}$ for some $K\geq 2$. Then  \eqref{eq:K12-cond1} cannot hold.
\end{claim}
\begin{proof} Suppose $W_K$ satisfies \eqref{eq:condition1} a.e. Then by \eqref{eq:bsize}, each part 
must have size $1-3t$, that is, $1-3t = 1/K$.
In other words, 
	\begin{align}
		t(K_{1, 2}, W_K) = \frac{K-1}{3K} . 
	\end{align}
	On the other hand, since $d_{W_K}(x) = \frac{K-1}{K}$ a.e., 
	\begin{align}
		t(K_{1, 2}, W_K) = \int_0^1 d_{W_K}(x)^2 \dd x = \frac{(K-1)^2}{K^2} . 
	\end{align}
	Thus we must have $\frac{K-1}{3K} = \frac{(K-1)^2}{K^2}$, that is, $K = \frac{3}{2}$, which is impossible. 
\end{proof}

Claim \ref{claim:Kimpossible} gives a contradiction and
completes the proof of Theorem \ref{thm:K12Z}.

\small 
\bibliographystyle{abbrvnat}
\bibliography{graphonbib}

\end{document}